\newtheorem{thm}{\bf Theorem}[section]
\newtheorem{prop}[thm]{\bf Proposition}
\newtheorem{lemma}[thm]{\bf Lemma}
\newtheorem{cor}[thm]{\bf Corollary}
\theoremstyle{definition}
\newtheorem{definition}[thm]{\bf Definition}
\theoremstyle{remark}
\newtheorem{remark}[thm]{\bf Remark}
\newtheorem{conjecture}[thm]{\bf Conjecture}
\newtheorem{question}[thm]{\bf Question}
\newtheorem{example}[thm]{\bf Example}
\numberwithin{equation}{section}
\newcommand{\HH}[3]{\operatorname{H}^{#1}_{#2}(#3)}
\newcommand\numberthis{\addtocounter{equation}{1}\tag{\theequation}}
\DeclareMathOperator{\height}{{ht}}
\DeclareMathOperator{\depth}{{depth}}
\DeclareMathOperator{\Spec}{{Spec}}
\DeclareMathOperator{\Ass}{{Ass}}
\DeclareMathOperator{\Min}{{Min}}
\DeclareMathOperator{\Tor}{{Tor}}
\DeclareMathOperator{\sell}{{s\ell}}
\DeclareMathOperator{\Cl}{{{Cl}}}
\DeclareMathOperator{\projdim}{{{projdim}}}
\DeclareMathOperator{\ara}{{{ara}}}
\DeclareMathOperator{\supp}{{supp}}
\DeclareMathOperator{\bght}{{bigheight}}
\DeclareMathOperator{\Image}{{Im}}
\DeclareMathOperator{\cx}{{cx}}
\def\ls{\leqslant}
\def\gs{\geqslant}
\def\f0{\mathbf{0}}
\def\fx{\mathbf{x}}
\def\fp{\mathbf{p}}
\def\fa{\mathbf{a}}
\def\fm{\mathfrak{m}}
\def\fp{\mathfrak{p}}
\def \CC{\mathbb C}
\def \RR{\mathbb R}
\def \ZZ{\mathbb Z}
\def \NN{\mathbb N}
\def \C{\mathcal C}
\def \G{\mathcal G}
\def \R{\mathcal R}
\def \I{\mathcal I}
\newcommand{\ses}[3]{0 \to {#1} \to {#2} \to {#3} \to 0}
\begin{document}

\title[Symbolic Analytic Spread:  Upper Bounds and Applications]{Symbolic Analytic Spread: Upper Bounds and Applications}

\author[Hailong Dao]{Hailong Dao}
\address{Hailong Dao\\ Department of Mathematics \\ University of Kansas\\405 Snow Hall, 1460 Jayhawk Blvd.\\ Lawrence, KS 66045}
\email{hdao@ku.edu}

\author[Jonathan Monta\~no]{Jonathan Monta\~no}
\address{Jonathan Monta\~no \\ Department of Mathematical Sciences  \\ New Mexico State University  \\PO Box 30001\\Las Cruces, NM 88003-8001}
\email{jmon@nmsu.edu}

\begin{abstract}
The symbolic analytic spread of an ideal $I$ is defined in terms of the rate of growth of the minimal number of generators of its symbolic powers. In this article we find upper bounds for the symbolic analytic spread under certain conditions in terms of other invariants of  $I$. Our methods also work for more general systems of ideals. As applications we provide bounds for the (local) Kodaira dimension of divisors, the arithmetic rank, and the Frobenius complexity. We also show sufficient conditions for an ideal to be a set-theoretic complete intersection. 
\end{abstract}

\keywords{Analytic spread,  symbolic powers, arithmetic rank, Kodaira dimension, set-theoretic complete intersection, Frobenius complexity.}
\subjclass[2010]{13A30, 13A15, 14M10, 13C40, 13C15,  13H15.}

\maketitle

\section{Introduction}
The symbolic powers of an ideal $I$ in a Noetherian commutative ring $R$ have long been studied in commutative algebra and algebraic geometry, see \cite{DaoSurvey} for a survey. In recent years, this subject has enjoyed an intense resurgence, thanks to exciting new connections and methods discovered by a number of researchers (see for instance \cite{DGJ}, \cite{GH}, \cite{MS}, \cite{MNB},  \cite{NT}, \cite{PST}).  

Despite these developments, many basic questions about symbolic powers remain tantalizingly open. For example, we do not know if the number of generators of the $n$-th symbolic power $I^{(n)}$ of a homogenous ideal 
$I$ in a  polynomial ring is  bounded above by a polynomial in $n$. The purpose of this article is to start a systematic study of the rate of growth  of $\mu(I^{(n)})$, as $n$ varies, for general ideals in a local ring.  Such invariant has been considered in specific contexts under the name {\it symbolic analytic spread} of $I$. We adopt this terminology and denote this invariant by $\sell(I)$ (see Definition \ref{keydef}).

In this article, we are able to show tight upper bounds for the symbolic analytic spread using the usual analytic spread $\ell(I)$, or the dimension of $R/I$, under certain conditions (for instance if $\dim R/I\ls 3$, if $\depth R/I^{(n)}\gs \dim R/I-1$ eventually, or if $I$ is locally a complete intersection on the punctured spectrum). While these conditions are restrictive, as far as we know the results are new even for prime ideals over a regular local ring. Previously, in \cite{Dutta}, Dutta had found a similar upper bound for $\sell(I)$ under the assumption that $R$ is $S_2$, $\dim R/I\ls 2$, and $I$ is the intersection of prime ideals of the same height. 
It is also worth noting that  our methods work for general systems of ideals containing the powers of $I$,  and with a view toward further applications we state our main results in this framework. See Theorems \ref{gen} and \ref{duttaLike} for the key general statements; and Corollaries \ref{depthCor}, \ref{ciCor}, \ref{duttaCor}, and Proposition  \ref{extr}, for particular results about symbolic powers. 

We proceed to apply our bounds in a number of situations. For $I$ locally principal on the punctured spectrum (i.e., $I$ represents a line bundle on $\Spec R \setminus \{\fm\}$), the symbolic  analytic spread can be viewed as a local version of the Kodaira dimension associated to this divisor, and our bounds imply that this is less than or equal to $\ell(I)$ (see Corollary \ref{divisor}). We also give bounds on the Frobenius complexity of $R$ (see Corollary \ref{FrobCor}). Finally, we give a criterion for an ideal to be set-theoretic complete intersection, extending classical results on this problem (see Corollary \ref{SCICor}). We discuss the case of monomial ideals in Section \ref{monoSec} and prove a sharp bound for $\sell(I)$ in this case, which can be used to recover a bound by Lyubeznik on the arithmetic rank of monomial ideals, and to provide a lower bound for the depth of symbolic powers of squarefree monomial ideals (see Theorem \ref{mainMonomial}, and Corollaries \ref{LyuCor} and \ref{depthBound}).

We end the paper with a number of open questions and conjectures, whose main purpose is to highlight how much we still do not know about the symbolic analytic spread.

\section{Notation and preliminary results}\label{FirstS}

Let $(R,\fm,k)$ be a Noetherian local ring and  $\I=\{I_n\}_{n\in \NN}$  a system of ideals of $R$, with $I_0=R$. We often assume that the ideals of $\I$ contain the powers of an ideal $I$ of $R$, i.e., $I^n\subseteq I_n$ for every $n\in \NN$. A natural example where this condition is satisfied is when $\I$ consists of symbolic powers.

Let $I$ and $J$ be ideals of $R$, for any $n\in \NN$ we call the ideal $I^{(n)}_J:=(I^n:_RJ^\infty)$ the {\it $n$th-symbolic power of $I$ with respect to $J$}. Let $\Ass^\infty(I):=\lim_{n\to \infty} \Ass(I^n)$, which exists and is a finite set by \cite{Br2}. 
We note that if $J$ is the intersection of the  prime ideals in $ \Ass^\infty(I)\setminus \Min(I)$ then for $n\gg 0$, the ideal $I^{(n)}_J$ is equal to $I^{(n)}$ the {\it $n$th-$($ordinary$)$ symbolic powers of $I$}, i.e., $$I^{(n)}=\displaystyle\bigcap_{\fp \in \Min(I)} (I^nR_\fp \cap R);$$
 whereas if $J=\fm$, then $I^{(n)}_J$ is equal to $\{(I^n)^{sat}\}_{n\in \NN}$, the  saturated powers of $I$. 
 
For two sequences of non-negative real numbers  $\{a_n\}_{n\in \NN}$ and  $\{b_n\}_{n\in \NN}$, we write $a_n = O(b_n)$ if $\limsup_{n\rightarrow \infty}\frac{a_n}{b_n}<\infty$. For a finitely generated $R$-module $M$, we denote by $\mu(M)$ its  minimal number of generators.

\begin{definition}[cf. {\cite{BSch}, \cite{HKTT}}]\label{keydef}
We   define the {\it analytic spread} of the system $\I$, and denote it by $\ell(\I)$, as
$$\ell(\I)=\min\{t\in \RR\mid \mu(I_n) = O(n^{t-1})\}.$$
If $\I=\{I^{(n)}\}_{n\in \NN}$,  we denote $\ell(\I)$  by $\sell(I)$ and call it the {\it symbolic analytic spread of $I$}. We also note that if  $\I=\{I^n\}_{n\in \NN}$, then $\ell(\I)$ is simply $\ell(I)$, the {\it analytic spread} of $I$.
\end{definition}



Let $\R^s(I):=\oplus_{n\in \NN} I^{(n)}t^n $ dentote the {\it symbolic Rees algebra of $I$}. We note that $\R^s(I)$ is Noetherian whenever $I$ is a  monomial ideal \cite[3.2]{HHT} or a determinantal  ideal \cite[10.2, 10.4]{BV}. However, this is not always the case even for prime ideals in regular rings \cite{Ro}.

When $\R^s(I)$ is Noetherian we can extract from it an upper bound for $\sell(I)$, which in turn provides a bound for the {\it arithmetic rank} of $I$, i.e., $$\ara(I)=\min \{u\mid \text{there exist }g_1,\ldots, g_u\in R \text{ such that } \sqrt{(\underline{g})}=\sqrt{I}\}.$$
We summarize these facts in the following proposition, but first we recall the following  fact.

\begin{remark}
For any $R$-ideal $I$ we have $\ara(I)\ls \ell(I)$ (see for example \cite[8.3.8]{HS}).
\end{remark}

\begin{prop}\label{ariRank}
Let $d=\dim R$. Assume $\depth R\gs \min\{\dim R/I, 1\}$ and that    $\R^s(I)$ is Noetherian. Then $$\ara(I)\ls \sell(I)\ls \max\{d-  \dim R/I,\, d-1\}.$$
\end{prop}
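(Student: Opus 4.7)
The plan is to pass from the symbolic Rees algebra to the ordinary Rees algebra of a single ideal by means of a Veronese, thereby reducing both inequalities to classical statements about ordinary powers.

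\textbf{Veronese reduction.} Since $\R^s(I)$ is Noetherian, standard graded-algebra generation theory provides a positive integer $k$ such that the Veronese $\R^s(I)^{(k)}=\bigoplus_{n\ge 0}I^{(kn)}t^{kn}$ is generated over $R$ by its degree-$k$ component; equivalently, $(I^{(k)})^n=I^{(kn)}$ for all $n\ge 0$. Setting $J:=I^{(k)}$, this gives $\mu(J^n)=\mu(I^{(kn)})$, so that the defining growth rates match: $\ell(J)=\sell(I)$. Moreover $\Min(J)=\Min(I)$, hence $\sqrt{J}=\sqrt{I}$.

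\textbf{The inequality $\ara(I)\le\sell(I)$.} From $\sqrt{J}=\sqrt{I}$ we have $\ara(I)=\ara(J)$, and the remark recalled before the proposition yields $\ara(J)\le\ell(J)=\sell(I)$.

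\textbf{The upper bound.} I would split on $\dim R/I$. If $\dim R/I=0$, the right-hand side is $d$, and the required inequality $\sell(I)\le d$ is just the classical bound $\ell(J)\le\dim R$ applied to $J$. If $\dim R/I\ge 1$, then the hypothesis forces $\depth R\ge 1$, the right-hand side equals $d-1$, and I would apply Burch's inequality to $J$:
$$\ell(J)\le\dim R-\min_{n\ge 1}\depth(R/J^n)=d-\min_{n\ge 1}\depth\bigl(R/I^{(kn)}\bigr).$$
It then suffices to show $\depth(R/I^{(n)})\ge 1$ for every $n$. This follows from the fact that $I^{(n)}=\bigcap_{\fp\in\Min(I)}(I^nR_\fp\cap R)$ is a primary decomposition of $I^{(n)}$, giving $\Ass(R/I^{(n)})\subseteq\Min(I)$. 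Since $\dim R/I\ge 1$ forces $\fm\notin\Min(I)$, prime avoidance then delivers a non-zerodivisor on $R/I^{(n)}$ lying in $\fm$, so $\depth(R/I^{(n)})\ge 1$, as needed.

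\textbf{Main obstacle.} The whole argument hinges on the Veronese reduction: once $\sell(I)$ is realized as $\ell(J)$ for a genuine ideal $J$ with the same radical as $I$, everything else reduces to classical facts ($\ara\le\ell$, Burch's inequality, and the standard description of associated primes of symbolic powers). The care required is that a single $k$ coming from Noetherian generation must simultaneously witness $\ell(J)=\sell(I)$ and $\sqrt{J}=\sqrt{I}$; fortunately, both are immediate consequences of the equality $(I^{(k)})^n=I^{(kn)}$ and of $I\subseteq I^{(k)}\subseteq\sqrt{I}$.
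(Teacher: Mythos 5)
Your proof and the paper's share the same opening move: use Noetherianity of $\R^s(I)$ to find a Veronese exponent $k$ with $I^{(kn)}=(I^{(k)})^n$, set $J=I^{(k)}$, and reduce both inequalities to statements about the ordinary ideal $J$. The divergence is in how the bound $\sell(I)\ls d-1$ (when $\dim R/I\gs 1$) is obtained. The paper observes that $J^{(n)}=J^n$ forces $\gr_J(R)=\oplus J^n/J^{n+1}$ to be torsion-free over $R/J$; combined with $\fm\notin\Min(R/J)$ and $\depth R\gs 1$, this shows $\fm$ lies in no minimal prime of $\gr_J(R)$, so $\ell(J)=\dim k\otimes_R\gr_J(R)<\dim\gr_J(R)=d$. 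You instead invoke Burch's inequality $\ell(J)\ls d-\min_n\depth(R/J^n)$ and verify $\depth(R/I^{(m)})\gs 1$ for all $m$ directly from $\Ass(R/I^{(m)})\subseteq\Min(I)$ and $\fm\notin\Min(I)$. Both tools are classical and both succeed; the Burch route is arguably a touch cleaner and, notably, does not appear to use $\depth R\gs 1$ at all at that step.

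One point should be tightened. You assert $\ell(J)=\sell(I)$ from $\mu(J^n)=\mu(I^{(kn)})$. That identity gives $\ell(J)\ls\sell(I)$ immediately (the direction needed for $\ara(I)\ls\sell(I)$), but for the upper bound you need $\sell(I)\ls\ell(J)$, which requires controlling $\mu(I^{(m)})$ for $m$ not divisible by $k$. This is true, but the reason is that $T=k\otimes_R\R^s(I)$ is a finitely generated module over its Veronese $T(k)=k\otimes_R R[Jt^k]$, so $\dim T=\dim T(k)$ and the degrees of growth of the Hilbert functions of $T$ and $T(k)$ coincide; this is exactly how the paper packages the step, as $\sell(I)=\dim T=\dim T(k)=\ell(J)$. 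It is worth stating this explicitly rather than leaving it implicit in the Veronese reduction.
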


\begin{proof} 
Set $T=k\otimes_R \R^s(I)$. Since $T$  is Noetherian, there exists $c\in \ZZ_{>0}$ such that the $c$th-Veronese subalgebra $T(c)=\oplus_{n\in \NN}T_{cn}\subseteq T$ is a standard graded $R$ algebra and $T$ is a finitely generated  $T(c)$-module (\cite[2.1]{HHT}, \cite{Ratliff}). Set $J=I^{(c)}$, then $\sell(I)= \dim T=\dim T(c)=\ell(J)\ls d$. Since $\sqrt{J}=\sqrt{I}$, we  have $\ara(I)=\ara(J)\ls \ell(J) =\sell(I)$. Now, since $J^{(n)}=J^n$ for every $n\in \NN$, we have  $\Ass(J^n/J^{n+1})\subseteq \Ass(R/J^{n+1})=\Min(R/J)$. This implies that the algebra $\G(J):=\oplus_{n\in \NN}J^n/J^{n+1}$ is torsion-free over $R/J$. If $\dim R/J = \dim R/I\gs 1$ and $\depth R\gs 1$, it follows that $\fm$ is not contained in any minimal prime of  $\G(J)$.  Hence, $\ell(J)=\dim k\otimes_R \G(J)<\dim  \G(J)=d$, finishing the proof.
\end{proof}

\begin{remark}\label{someC}
 From the proof of Proposition \ref{ariRank} it can be seen that if  $T = \R^s(I)\otimes_R k=\oplus_{n\in \NN}I^{(n)}/\fm I^{(n)}$ is Noetherian, then $\sell(I)=\dim T$. Moreover, there exists $c\in \ZZ_{>0}$ such that $\sell(I)=\ell(I^{(nc)})$ for every $n\in \ZZ_{>0}$; in fact, this holds for every $c\gg 0$ \cite[2.4.4]{Ratliff}.
\end{remark}

The following example shows that the  bound for the arithmetic rank in terms of the symbolic analytic spread  sometimes is sharper than the one provided by the analytic spread. 

\begin{example}(\cite{BSch})\label{determ}
Let $X$ be a generic matrix of size $m\times n$. Let $R=k[X]$ be the polynomial ring in the entries of $X$ over the field $k$ and $\fm$ its irrelevant maximal ideal. We denote by $I_t(X)$ the {\it determinantal} ideal generated by the $t$-minors of $X$. Assume $k$ is algebraically closed, then for every  $t\ls m$ we have
$$\sell(I_t(X))=\ara I_t(X) = mn-t^2+1.$$
However, if $t<\min\{m,n\}$ then $\ell(I_t(X))=mn$. 
\end{example}

For an $R$-module $M$, we denote by $e(M)=\lim_{n\rightarrow \infty}\frac{(\dim M)!\lambda(M/\fm^nM)}{n^{\dim M}}$ the {\it multiplicity} of $M$. We recall the following  well-known fact.

\begin{remark}\label{CMbound}
If $M$ is a Cohen-Macaulay $R$-module, then $\mu(M)\ls e(M)$.
\end{remark}

The next example shows that the Noetherian assumption on $\R^s(I)$ is necessary in Proposition \ref{ariRank}. 

\begin{example}\label{notariBound}
 Let $$R=\CC[[x,y,z]]/(x^3+y^3+z^3).$$ Since $R$ is a normal two-dimensional domain that is not rational singularity, the divisor class group $\Cl(R)$ is infinite \cite[17.4]{Lipman69}. Let $\fp\in\Spec R$ be such that  $[\fp]\in \Cl(R)$ has infinite order. Since $\fp^{(n)}$ is a Maximal Cohen-Macaulay module of rank one for every $n\gs 1$, by \cite[4.7.9]{BH} we must have $e(\fp^{(n)})= e(R)=3$. Then,  $\mu(\fp^{(n)})\ls 3$ by Remark \ref{CMbound}. Therefore, $\sell(\fp)=1$, but by assumption $\fp$  cannot be the radical of a principal ideal and then $\ara(\fp)>1$.
\end{example}

\section{Main results}

This section includes the main results of this paper. We follow the notation introduced in Section \ref{FirstS}. In particular,  we assume $(R,\fm,k)$ is a Noetherian local ring and $I$ an $R$-ideal.

We begin with the following result that is of independent interest. Here, we extend the bound in Remark \ref{CMbound} for non-Cohen-Macaulay modules.  We recall that  $x\in \fm$ is  a {\it superficial element} of $\fm$ with respect to the $R$-module $M$ if there exists $c\in \NN$ such that for every integer $n\gs c$ we have $(\fm^{n+1}M:_M x)\cap \fm^c M=\fm^nM$. The sequence $x_1,\ldots, x_s$ is {\it superficial}  of $\fm$ with respect to $M$ if for every $i=1,\ldots, s$, the image of $x_i$ in $\overline{R}:=R/(x_1,\ldots, x_{i-1})$ is a superficial element of $\fm\overline{R}$ with respect to $M\otimes_R \overline{R}$.


\begin{prop}\label{keybound}
Let $r=\dim M\gs 1$. Then for every superficial sequence $\underline x= x_1,\dots, x_{r-1}$ of $\fm$ with respect to $M$ we have $$\mu(M) \ls \mu(\HH{0}{\fm}{M/(\underline x)M}) + e(M) $$
\end{prop}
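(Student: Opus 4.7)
The plan is to reduce to a one-dimensional quotient via the superficial sequence and then split off the $\fm$-torsion part. The key reduction is that, since $\underline{x}\subseteq \fm$, Nakayama's lemma gives
$$\mu(M) = \dim_k \bigl(M/\fm M\bigr) = \dim_k \bigl(N/\fm N\bigr) = \mu(N),$$
where $N:=M/(\underline x)M$. By the standard fact that a superficial element of $\fm$ with respect to a positive-dimensional module is a parameter (and the inductive nature of the definition), $\dim N = r-(r-1) = 1$. So it suffices to show $\mu(N)\leq \mu(H^0_\fm(N))+e(N)$ and to identify $e(N)$ with $e(M)$.

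Next, I would look at the short exact sequence
$$0 \longrightarrow H^0_\fm(N) \longrightarrow N \longrightarrow \overline N \longrightarrow 0,$$
where $\overline N := N/H^0_\fm(N)$ has no $\fm$-torsion. Since $\dim \overline N = 1 = \depth \overline N$, the module $\overline N$ is Cohen-Macaulay, so Remark \ref{CMbound} yields $\mu(\overline N)\leq e(\overline N)$. Additivity of the number of generators along the short exact sequence (lifting a minimal generating set of $\overline N$ to $N$ and adjoining one for $H^0_\fm(N)$) gives $\mu(N) \leq \mu(H^0_\fm(N))+\mu(\overline N)$, and since $H^0_\fm(N)$ has finite length, hence dimension $0$, it contributes nothing to multiplicity: $e(\overline N) = e(N)$.

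The final ingredient, and the main technical point, is the invariance of multiplicity under a superficial sequence: $e(M/(\underline x)M)=e(M)$. This is classical and follows directly from the defining property of a superficial element. Indeed, for a single superficial $x$ and $n\gg 0$, the exact sequence
$$0 \longrightarrow \bigl(\fm^{n+1}M:_M x\bigr)/\fm^n M \longrightarrow M/\fm^n M \xrightarrow{\ x\ } M/\fm^{n+1}M \longrightarrow M/(xM+\fm^{n+1}M)\longrightarrow 0$$
has, by the superficial condition, a kernel of bounded length, so the Hilbert polynomials of $M$ and of $M/xM$ have the same leading coefficient while the degree drops by one; hence $e(M/xM)=e(M)$. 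Iterating over $x_1,\dots,x_{r-1}$ (using that the image of $x_i$ is superficial in the quotient, per the definition in the paper) gives $e(N)=e(M)$, and combining with the previous paragraph produces the claimed bound. The only subtlety to verify carefully is this multiplicity invariance, but it is a standard consequence of the superficial hypothesis and does not require any additional assumptions on $M$.
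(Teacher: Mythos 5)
Your proof is correct and follows essentially the same route as the paper's: pass to the one-dimensional quotient $M/(\underline{x})M$, split off the $\fm$-torsion, apply the Cohen--Macaulay bound $\mu\ls e$ to the quotient, and use invariance of multiplicity under superficial reduction. The only difference is that you sketch in-line proofs of the facts the paper cites from Rossi--Valla (dimension drop and multiplicity preservation) and Bruns--Herzog (multiplicity unaffected by the finite-length submodule).
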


\begin{proof}
Set $\overline{M}=M/(\underline x)M$ and note $\dim \overline{M}=1$ \cite[Proposition  1.2(1)]{RV}. Then,
$$\mu(M)=\mu(\overline{M})\ls \mu(\HH{0}{\fm}{\overline{M}})+ \mu(\overline{M}/\HH{0}{\fm}{\overline{M}})\ls \mu(\HH{0}{\fm}{\overline{M}})+ e(\overline{M}/\HH{0}{\fm}{\overline{M}}),$$ 
where the first inequality follows from the short exact sequence $$\ses{\HH{0}{\fm}{\overline{M}}}{\overline{M}}{\overline{M}/\HH{0}{\fm}{\overline{M}}}$$ and the second one from Remark \ref{CMbound}. The conclusion now follows since $e(\overline{M}/\HH{0}{\fm}{\overline{M}})=e(\overline{M})$ by \cite[4.7.7]{BH} and $e(\overline{M})=e(M)$ by  \cite[Proposition 1.2(2)]{RV}.
\end{proof}

\begin{example}
 In the following cases the equality in Lemma \ref{keybound} holds.
 \begin{enumerate}
 \item[(1)] $M$ is an {\it Ulrich} $R$-module, i.e., $M$ is Cohen-Macaulay and $\mu(M)=e(M)$.
 \item[(2)]  $R$ is regular, $M$ is torsion-free, and $\underline{x}$ is part of a regular system of parameters. Indeed, in this case the ring $\overline{R}:=R/(\underline{x})$ is a DVR and then $\overline{M}:=M/(\underline x)M\cong \overline{R}^s\oplus N$ for some $s\gs 1$ and $N$ a zero-dimensional  $\overline{R}$-module.  Therefore, $\mu(M)=\mu(\overline{M})=s+\mu(N)$. On the other hand, $e(M)=e(\overline{M})=e(\overline{R}^s)=s$ and $\mu(\HH{0}{\fm}{\overline{M}}) =\mu(N)$.
 
 \end{enumerate}
\end{example}

For clarity of exposition, we define the following object depending of the the value of $\ell(I)$. 

\begin{definition}
$\tilde{\ell}(I)=
\begin{cases}
\ell(I)&\text{ if } \ell(I)<\dim(R)\\
d+1& \text{ if } \ell(I)=\dim(R)
\end{cases}.$
\end{definition}

We need one more lemma prior stating the first main result.

\begin{lemma}\label{sats}
Assume $R$ is  analytically unramified and formally equidimensional. Then, $\lambda(\HH{0}{\fm}{R/I^n})=O(n^{\tilde{\ell}(I)-1})$.
\end{lemma}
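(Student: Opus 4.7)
The plan is to prove the statement by a case analysis according to the definition of $\tilde{\ell}(I)$, namely whether $\ell(I)=d:=\dim R$ or $\ell(I)<d$, since the target bound $O(n^{\tilde{\ell}(I)-1})$ differs by one degree in these two regimes.

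For the case $\ell(I)=d$, so $\tilde{\ell}(I)=d+1$ and the desired bound is $O(n^{d})$, I would argue via the associated graded ring $G:=\gr_I(R)=\bigoplus_{n}I^n/I^{n+1}$ together with a telescoping estimate. Viewing $H^0_\fm(G)=\bigoplus_{n}H^0_\fm(I^n/I^{n+1})$ as a finitely generated graded $G$-module, every homogeneous element is annihilated by some power of $\fm$, so every associated prime of $H^0_\fm(G)$ in $G$ contains $\fm G$. Hence $\dim_{G}H^0_\fm(G)\ls \dim G/\fm G=\ell(I)=d$, and the Hilbert function of $H^0_\fm(G)$ satisfies $\lambda(H^0_\fm(I^n/I^{n+1}))=O(n^{d-1})$. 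Applying $H^0_\fm$ to the short exact sequence $0\to I^n/I^{n+1}\to R/I^{n+1}\to R/I^n\to 0$ and using left exactness yields
$$\lambda(H^0_\fm(R/I^{n+1}))\ls \lambda(H^0_\fm(R/I^n))+\lambda(H^0_\fm(I^n/I^{n+1})),$$
which telescopes to $\lambda(H^0_\fm(R/I^n))=O(n^{d})$, as required.

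For the case $\ell(I)<d$, where the desired bound is $O(n^{\ell(I)-1})$ (one degree sharper than what the above telescoping argument produces), I would instead invoke McAdam's theorem on persistent associated primes: for a formally equidimensional local ring, $\fm\in\Ass^\infty(I)$ if and only if $\ell(I)=d$. Under the hypothesis $\ell(I)<d$, this forces $\fm\notin\Ass^\infty(I)$, so $\fm\notin\Ass(R/I^n)$ for all $n\gg 0$, and consequently $H^0_\fm(R/I^n)=0$ eventually; the stated asymptotic bound then follows trivially.

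The main obstacle is ensuring that McAdam's theorem applies in exactly the generality required at the closed point of $\Spec R$, which is precisely the purpose of the formally equidimensional hypothesis. The analytically unramified assumption allows us (if needed) to pass to the reduced completion $\widehat R$---since lengths of $\fm$-torsion modules, $\ell(I)$, and both of the standing hypotheses are preserved---and to guarantee $\depth R\gs 1$ when $\dim R\gs 1$, which cleanly handles edge cases such as $\ell(I)=0$.
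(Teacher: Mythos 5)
Your argument splits into two cases, and the two halves have quite different statuses.

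Your Case 1 (the telescoping argument through $H^0_\fm(\gr_I R)$) is correct and is a self-contained alternative to the paper's approach, which simply cites \cite[4.7]{KV} for this case. Indeed $H^0_\fm(G)$ is a graded ideal of the Noetherian ring $G=\gr_I(R)$, annihilated by a fixed power of $\fm$, and supported on $V(\fm G)$, so its Hilbert function grows like $O(n^{\dim G/\fm G-1})$ with $\dim G/\fm G\ls\ell(I)$; the short exact sequences then telescope to give $\lambda(H^0_\fm(R/I^n))=O(n^{\ell(I)})$, which is what is wanted when $\ell(I)=d$. (Notice this does not use either standing hypothesis on $R$, which is consistent with the paper's treatment of this case.)

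Case 2 contains a genuine gap. You invoke ``McAdam's theorem: $\fm\in\Ass^\infty(I)$ iff $\ell(I)=\dim R$,'' but that is not McAdam's theorem. The result cited in the paper, \cite[5.4.6]{HS}, characterizes $\ell(I)=\dim R$ by the condition $\fm\in\overline{A^*}(I)$, i.e.\ $\fm\in\Ass(R/\overline{I^n})$ for $n\gg 0$, where $\overline{I^n}$ is the \emph{integral closure}. The stable sets $\Ass^\infty(I)=\Ass(R/I^n)$ and $\overline{A^*}(I)=\Ass(R/\overline{I^n})$ satisfy $\overline{A^*}(I)\subseteq\Ass^\infty(I)$ but need not coincide, and $\fm$ can lie in $\Ass^\infty(I)$ even when $\ell(I)<\dim R$. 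So you cannot conclude that $H^0_\fm(R/I^n)=0$ for $n\gg 0$; the lemma would be trivial in this case if you could, and the paper's proof goes to considerable lengths precisely because $(I^n)^{sat}/I^n$ need not vanish. The paper instead uses McAdam (correctly) to get $(I^n)^{sat}\subseteq(\overline{I^n})^{sat}=\overline{I^n}$, and then uses the analytically unramified hypothesis to see that $\R^{sat}=\oplus_n(I^n)^{sat}t^n$ is a finitely generated module over the Rees algebra $\R$, sandwiched as $\R\subseteq\R^{sat}\subseteq\overline{\R}$; since some $\fm^N$ kills $\R^{sat}/\R$, its Hilbert function has degree $\ls\dim\R/\fm^N\R-1=\ell(I)-1$. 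It is a telltale sign of the gap that your argument never actually uses the analytically unramified hypothesis, which is essential to this finiteness step. Your Case 2 should be replaced by this integral-closure/Rees-module argument (or some equivalent use of $\overline{I^n}$); the McAdam-style vanishing you want is simply not available for ordinary powers.
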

\begin{proof}
Set $d=\dim(R)$. If $\ell(I)=d$, the statement follows by \cite[4.7]{KV}. We then assume $\ell(I)<d$ and thus, by McAdam's Theorem \cite[5.4.6]{HS}, we have $(\overline{I^n})^{sat}=\overline{I^n}$, where $\overline{J}$ denotes the integral closure of the ideal $J$. Therefore, $(I^n)^{sat}\subseteq \overline{I^n}$ for all $n> 0$. Set $\R=\oplus_{n\in \NN} I^nt^n$, $ \R^{sat}= \oplus_{n\in \NN} (I^n)^{sat}t^n$, and  $\overline{\R}=\oplus_{n\in \NN}\overline{I^n}t^n$. Therefore, $\R \subseteq \R^{sat} \subseteq \overline{\R}$, and then $\R^{sat}$ is a finitely generated $\R$-module \cite[9.2.1]{HS}. It follows that there exists an $N$ such that $\fm^N \R^{sat} \subseteq \R$. Thus, $\R^{sat}/ \R$ is a a finite $\R/\fm^N \R$-module. Then $\lambda(\HH{0}{\fm}{R/I^n})=\lambda((I^n)^{sat}/I^n)= \lambda\big([\R^{sat} / \R]_n\big)$ agrees with a polynomial of degree at most $\dim \R/\fm^N\R-1 = \ell(I)-1$ for $n\gg 0$. 
\end{proof}

We are now ready to present our first main theorem.

\begin{thm}\label{gen}
Assume $R$ is  analytically unramified and formally equidimensional. Let   $\I=\{I_n\}_{n\in \NN}$ be a system of ideals such that
\begin{enumerate}
\item $I^n\subseteq I_n$ for all $n\gs 0$, and
\item $\depth R/I_n\gs \dim (I_n/I^n)-1$ for $n\gg 0$. 
\end{enumerate}

Then,
$\ell(\I)\ls \ell(I)+1$. 

\end{thm}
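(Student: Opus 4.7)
The plan is to prove $\mu(I_n) = O(n^{\ell(I)})$, which is exactly the statement $\ell(\I) \ls \ell(I) + 1$. Set $N_n := I_n/I^n$. The short exact sequence $0 \to I^n \to I_n \to N_n \to 0$, together with right-exactness of $-\otimes_R k$, yields $\mu(I_n) \ls \mu(I^n) + \mu(N_n)$. Since $\mu(I^n) = O(n^{\ell(I)-1})$ by the very definition of $\ell(I)$, everything reduces to showing $\mu(N_n) = O(n^{\ell(I)})$.

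To bound $\mu(N_n)$, I would apply Proposition \ref{keybound} to $N_n$. Let $r_n := \dim N_n$; since $\ann(N_n) \supseteq I^n$, one has $r_n \ls \dim R/I$. After a harmless extension of the residue field, choose a sequence $\underline{x} = x_1, \ldots, x_{r_n - 1}$ in $\fm$ that is simultaneously $R$-regular, $R/I_n$-regular (which is possible by hypothesis (2), since $\depth R/I_n \gs r_n - 1$), and superficial for $\fm$ with respect to $N_n$. Proposition \ref{keybound} then yields
$$\mu(N_n) \ls \mu(H^0_\fm(N_n/\underline{x}N_n)) + e(N_n),$$
with the low-dimensional edge case $r_n \ls 0$ handled directly via $N_n \hookrightarrow H^0_\fm(R/I^n)$ together with Lemma \ref{sats}.

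For the multiplicity term, the inclusion $\Ass(N_n) \subseteq \Ass^\infty(R/I^n)$ (finite by Brodmann) combined with McAdam's theorem forces $\height \fp = \ell(IR_\fp) \ls \ell(I)$ for every such prime $\fp$, so that any top-dimensional associated prime of $N_n$ satisfies $r_n = \dim R/\fp \gs d - \ell(I)$. Localizing the embedding $N_n \hookrightarrow R/I^n$ at such a $\fp$ and applying a local version of Lemma \ref{sats} (valid because formal equidimensionality is inherited by $R_\fp$) gives $\lambda_{R_\fp}(N_{n,\fp}) = O(n^{\height \fp})$; summing against $e(R/\fp)$ yields $e(N_n) = O(n^{d - r_n}) = O(n^{\ell(I)})$.

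For the local cohomology term, I would tensor $0 \to N_n \to R/I^n \to R/I_n \to 0$ with $R/(\underline{x})$ and use the vanishing of $\Tor^R_1(R/I_n, R/(\underline{x}))$---a consequence of the $R$- and $R/I_n$-regularity of $\underline{x}$---to obtain
$$H^0_\fm(N_n/\underline{x}N_n) \hookrightarrow H^0_{\bar \fm}(\bar R/\bar I^n),$$
where $\bar R := R/(\underline{x})$ and $\bar I := I\bar R$. Lemma \ref{sats} applied to $\bar R$ then bounds the right-hand side by $O(n^{\tilde \ell(\bar I) - 1})$. Using the natural surjection of fiber cones $k \otimes_R \R(I) \twoheadrightarrow k \otimes_R \R(\bar I)$, which gives $\ell(\bar I) \ls \ell(I)$, together with $\dim \bar R = d - r_n + 1$ and the McAdam bound $r_n \gs d - \ell(I)$, a short two-case analysis (according to whether $\ell(\bar I) < \dim \bar R$ or $\ell(\bar I) = \dim \bar R$) verifies $\tilde \ell(\bar I) - 1 \ls \ell(I)$ in either case. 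Hence $\mu(H^0_\fm(N_n/\underline{x}N_n)) = O(n^{\ell(I)})$, and combining with the multiplicity estimate completes the bound $\mu(N_n) = O(n^{\ell(I)})$. The principal obstacle is verifying that the hypotheses of Lemma \ref{sats} are preserved when passing to $\bar R = R/(\underline{x})$---in particular, choosing $\underline{x}$ so that $\bar R$ remains analytically unramified and formally equidimensional while retaining its regularity and superficiality properties with respect to $R/I_n$ and $N_n$.
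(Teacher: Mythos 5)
Your proposal follows essentially the same outline as the paper's proof: reduce to bounding $\mu(I_n/I^n)$, apply Proposition~\ref{keybound}, bound the multiplicity term via the associativity formula, finiteness of $\Ass^\infty(I)$, McAdam's theorem, and a localized version of Lemma~\ref{sats}, and bound the local cohomology term by tensoring the defining short exact sequence with $R/(\underline{x})$ and again invoking Lemma~\ref{sats}. The arithmetic in your ``two-case analysis'' for $\tilde\ell(\bar I)-1 \ls \ell(I)$ checks out, and your handling of the multiplicity term (via $\height\fp = \ell(I_\fp) \ls \ell(I)$ for $\fp \in \Ass^\infty(I)$) is a slightly more direct route to the same estimate the paper gets.

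There are two places where your write-up is weaker than the paper's. First, your phrase ``after a harmless extension of the residue field, choose a sequence $\underline{x}$'' papers over the crucial uniformity issue: the superficial sequence must be chosen \emph{once} so that it works simultaneously for all (cofinitely many) $M_n$; otherwise the implied constants in the $O(n^{\ell(I)})$ estimates, particularly in the Lemma~\ref{sats} bound applied to $\bar R = R/(\underline{x})$, would depend on $n$ and the argument collapses. The paper resolves this with the explicit faithfully flat extension $R' = R[\underline{z}]_{\fm R[\underline{z}]}$ and generic linear forms $x_i = \sum_j z_{i,j} f_j$, citing \cite[Theorem 1.2]{RV} for the fact that such generic forms are simultaneously superficial for every $M_n'$; it also partitions $\NN$ into the dimension strata $\Lambda_i$ so that a fixed-length prefix of the generic sequence works uniformly on each stratum. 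You should make this construction explicit. Second, you correctly flag that Lemma~\ref{sats} is being applied to the quotient ring $\bar R = R/(\underline{x})$, whose analytic unramifiedness and formal equidimensionality need verification; the paper applies the lemma to $\overline{R'}$ without comment, and the generic-coefficients construction is also what makes this step tenable, so sorting out point one largely dissolves point two.
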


\begin{proof}

Set $M_n = I_n/I^n$.  
 From $\ses{I^n}{I_n}{M_n}$ we obtain $\mu(I_n) \ls \mu(I^n)+\mu(M_n)$, so it suffices to show $\mu(M_n) = O(n^{\ell(I)})$ which we prove  using  Proposition \ref{keybound}. In order to construct a sequence that is  superficial  simultaneously for all the $M_n$, we need to introduce a faithfully flat extension of $R$.  
 
 Let $f_1,\ldots, f_u$ be a minimal set of generators of $\fm$ and  $r = \dim R/I$. Consider a set of $ru$ variables $\underline{z}=\{z_{i,j}\}_{1\ls i\ls r,\, 1\ls j\ls u}$ and $R' = R[\underline{z}]_{\fm R[\underline{z}]}$. We also consider the elements  $x_i = \sum_{j=1}^u z_{i,j}f_j\in R'$ for $i=1,\ldots, r$. For every $R$-module $M$ we denote by $M'$ the $R'$-module $M\otimes_R R'$ and we note that $\mu(M)=\mu( M')$. It suffices to show $\mu(M_n')=O(n^{\ell(I)})$.
 
 For each $i=0,\ldots, r$ we consider the set $\Lambda_i=\{n\mid \dim M_n'=i\}$. We  show that for each $i$, we have $\mu(M_n')=O(n^{\ell(I)})$ for $n\in \Lambda_i$. By Lemma \ref{sats}, for $n\in \Lambda_0$ we have $\mu(M_n')\ls \lambda(M_n')\ls \HH{0}{\fm}{R'/(I')^n}=O(n^{\ell(I)})$. Now, fix $i\gs 1$ and consider the modules $M'_n$ for $n\in \Lambda_i$. We may also assume $\Lambda_i$ is an infinite set.  From the short exact sequence 
 \begin{equation}\label{theSes}
 \ses{M_n'}{R'/(I')^n}{R'/I_n'}
\end{equation} 
it follows that $ \Min(M_n')\subseteq \Ass^\infty(I')$. In particular, $\cup_{n\in \Lambda_i}\Min(M_n')$ is a finite set. By the associativity formula \cite[4.7.8]{BH} we have 
$$e(M_n')=\sum_{\fp}\lambda((M'_n)_\fp)e(R'/\fp)$$ where the sum  ranges   over $\fp\in \cup_{n\in \Lambda_i}\Min(M_n')$ with $\dim R'/\fp=i$. For each of these $\fp$ and $n$ we have $\dim (M'_n)_\fp\ls 0$, therefore 
\begin{equation}\label{geq1}
\lambda((M'_n)_\fp)\ls \lambda \big(\HH{0}{\fp R'_{\fp}}{R'_{\fp}/(I'_{\fp})^n}\big)  =O(n^{\ell(I')})=O(n^{\ell(I)}),
\end{equation} where the first equality holds by Lemma \ref{sats} and the fact that the analytic spread does not increase after localization.  

For every $n\in \Lambda_i$, we have that $\underline{x}=x_1,...,x_{i-1}$ is a superficial sequence of $\fm R'$ with respect to $M'_n$ (see for instance \cite[Theorem 1.2]{RV}). 
Moreover, by the assumption (2), $\underline{x}$ is regular on $R'/I_n'$ for $n\gg 0$ and then  $\Tor_1^{R'}(R'/I_n',R'/(\underline{x}))=0$. 
Thus,  tensoring \eqref{theSes} with $\overline{R'}:=R'/(\underline{x})$ we conclude that $M_n'/(\underline{x})M_n'$ embeds into  $ \overline{R'}/I^n\overline{R'}$. Hence,
\begin{equation}\label{geq2}
\mu\big(\HH{0}{\fm}{M_n'/(\underline{x})M_n'}\big) \ls \lambda\big(\HH{0}{\fm}{M_n'/(\underline{x})M_n'}\big)\ls \lambda\big(\HH{0}{\fm}{\overline{R'}/I^n\overline{R'}}\big)=O(n^{\ell(I)}),
\end{equation}
again by Lemma \ref{sats}. The conclusion now follows from \eqref{geq1}, \eqref{geq2}, and Proposition \ref{keybound}.
\end{proof}

From   Theorem  \ref{gen} we obtain the following bounds for the symbolic analytic spread.

\begin{cor}\label{depthCor}
Let $R$ and $I$ be as in Theorem \ref{gen}. If $\depth R/I^{(n)}\gs \dim R/I-2$ for every $n\gg 0$, then $\sell(I)\ls \ell(I)+1$. In particular, this holds if $\dim R/I\ls 3$.
\end{cor}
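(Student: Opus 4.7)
The plan is to derive this corollary as a direct application of Theorem \ref{gen} with the choice $\I=\{I^{(n)}\}_{n\in\NN}$, so that by definition $\ell(\I)=\sell(I)$. Hypothesis (1) of Theorem \ref{gen} is immediate since $I^n\subseteq I^{(n)}$ for every $n\gs 0$. The only real content is in verifying hypothesis (2) from the depth assumption.

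The key dimension estimate I would establish is $\dim(I^{(n)}/I^n)\ls \dim R/I -1$. For any $\fp\in\Min(I)$ one has $(I^{(n)})_{\fp}=I^nR_{\fp}$ by the very definition of the symbolic power, so $(I^{(n)}/I^n)_{\fp}=0$. Thus $\Supp(I^{(n)}/I^n)\subseteq V(I)\setminus \Min(I)$, and every prime $\fp$ in this support strictly contains some $\fq\in\Min(I)$; such a proper containment automatically forces $\dim R/\fp<\dim R/\fq\ls \dim R/I$, yielding the claimed bound. Combining this with the standing assumption $\depth R/I^{(n)}\gs \dim R/I - 2$, we obtain
\[
\depth R/I^{(n)} \gs \dim R/I - 2 \gs \dim(I^{(n)}/I^n) - 1
\]
for $n\gg 0$, which is exactly hypothesis (2) of Theorem \ref{gen}. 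Applying the theorem gives $\sell(I)=\ell(\I)\ls \ell(I)+1$.

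For the ``in particular'' statement, I would check that when $\dim R/I\ls 3$ the depth hypothesis holds unconditionally. When $\dim R/I\ls 2$, the bound $\depth R/I^{(n)}\gs \dim R/I-2$ reduces to depth $\gs 0$, which is automatic. When $\dim R/I=3$, one needs $\depth R/I^{(n)}\gs 1$, and this follows from the standard fact that $\Ass(R/I^{(n)})\subseteq \Min(I)$ (since $I^{(n)}$ is the intersection of the $\fp$-primary components of $I^n$ for $\fp\in\Min(I)$); as $\dim R/I\gs 1$ forces $\fm\notin\Min(I)$, the maximal ideal cannot be associated to $R/I^{(n)}$, giving $\depth R/I^{(n)}\gs 1$.

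There is no serious obstacle here; the argument is essentially bookkeeping once Theorem \ref{gen} is in hand. The only step requiring a moment of thought is the dimension estimate on the quotient $I^{(n)}/I^n$, and it is precisely this quotient---which is supported off the minimal primes of $I$---that controls the slack between the given depth condition on $R/I^{(n)}$ and the depth-versus-dimension condition demanded by Theorem \ref{gen}.
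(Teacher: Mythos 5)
Your proof is correct and takes essentially the same route as the paper, which compresses the whole argument into the one-line observation that $\dim(I^{(n)}/I^n)<\dim R/I$ (this is the key bound you establish via the support containment $\Supp(I^{(n)}/I^n)\subseteq V(I)\setminus\Min(I)$). Your additional verification of the ``in particular'' case via $\Ass(R/I^{(n)})\subseteq\Min(I)$ is the intended, standard argument.
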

\begin{proof}
The result follows from Theorem \ref{gen} since $\dim I^{(n)}/I< \dim R/I$.
\end{proof}

\begin{cor}\label{ciCor}
Let $R$ and $I$ be as in Theorem \ref{gen}. If $J$ is a proper ideal such that $\dim R/J\ls 2$ and $\I=\{I^{(n)}_J\}_{n\in \NN}$, then $\ell(\I)\ls \ell(I)+1$. In particular,  if $I$ is a complete intersection locally in codimension $\dim R -3$, then $\sell(I)\ls \ell(I)+1$.
\end{cor}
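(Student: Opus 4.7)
The plan is to deduce both assertions from Theorem~\ref{gen} applied to the system $\I = \{I^{(n)}_J\}_{n \in \NN}$. Condition~(1) of Theorem~\ref{gen} is immediate since $I^n \subseteq (I^n :_R J^\infty) = I^{(n)}_J$. The entire content of the first claim is therefore the verification of condition~(2): $\depth R/I^{(n)}_J \gs \dim(I^{(n)}_J/I^n) - 1$ for $n \gg 0$.

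For this, set $M_n = I^{(n)}_J/I^n$. By the definition of saturation, each generator of $I^{(n)}_J$ is annihilated by some power of $J$ modulo $I^n$, so $J^{k_n} M_n = 0$ for some $k_n$. Hence $\Supp M_n \subseteq V(J)$ and $\dim M_n \ls \dim R/J \ls 2$. For the depth estimate, recall the standard fact that if $I^n = \bigcap Q_i$ is a minimal primary decomposition with $\sqrt{Q_i} = \fp_i$, then $I^{(n)}_J = (I^n :_R J^\infty) = \bigcap_{\fp_i \not\supseteq J} Q_i$, whence
\[
\Ass(R/I^{(n)}_J) = \Ass(R/I^n) \setminus V(J).
\]
Since $J$ is a proper ideal of the local ring $(R,\fm)$ we have $\fm \supseteq J$, so $\fm \notin \Ass(R/I^{(n)}_J)$ whenever $I^{(n)}_J \neq R$. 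Thus $\depth R/I^{(n)}_J \gs 1 \gs \dim M_n - 1$, verifying hypothesis~(2), and Theorem~\ref{gen} yields $\ell(\I) \ls \ell(I) + 1$.

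For the ``in particular'' statement, take $J = \bigcap_{\fp \in \Ass^\infty(I) \setminus \Min(I)} \fp$, so that $I^{(n)}_J = I^{(n)}$ for $n \gg 0$ (as noted in Section~\ref{FirstS}), and it is enough to show $\dim R/J \ls 2$ under the local complete intersection hypothesis. Fix $\fp \in \Ass^\infty(I) \setminus \Min(I)$, so that $\fp R_\fp \in \Ass(R_\fp/(IR_\fp)^n)$ for $n \gg 0$ while $\fp \notin \Min(I)$. If $IR_\fp$ were a complete intersection, then $R_\fp/(IR_\fp)^n$ would be Cohen--Macaulay and its associated primes would coincide with $\Min(IR_\fp)$, forcing $\fp \in \Min(I)$, a contradiction. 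Hence $IR_\fp$ is not a complete intersection, and the hypothesis forces $\height \fp > \dim R - 3$. Since $R$ is formally equidimensional (hence catenary and equidimensional by Ratliff's theorem), the dimension formula $\dim R/\fp + \height \fp = \dim R$ gives $\dim R/\fp \ls 2$. Taking the maximum over such $\fp$ yields $\dim R/J \ls 2$, and the first part applies.

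The argument is essentially bookkeeping; the only subtle ingredient is the characterization of associated primes of the $J$-saturation, which immediately makes the depth one bound free, and the use of catenarity plus equidimensionality (which must be extracted from the formally equidimensional hypothesis) to translate the codimension condition into the dimension bound $\dim R/\fp \ls 2$. I do not anticipate any other obstacle.
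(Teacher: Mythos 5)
Your proof of the first assertion is essentially the paper's argument: you verify hypotheses (1) and (2) of Theorem~\ref{gen} by observing $\dim(I^{(n)}_J/I^n) \ls \dim R/J \ls 2$ and $\depth R/I^{(n)}_J \gs 1$. The paper simply asserts $\depth R/I^{(n)}_J \gs 1$ without elaboration; your primary-decomposition derivation of $\Ass(R/I^{(n)}_J) = \Ass(R/I^n)\setminus V(J)$ is a valid way to see it (the slicker route is to note that $I^{(n)}_J$ is $J$-saturated, so no associated prime of $R/I^{(n)}_J$ contains $J$, and $\fm \supseteq J$). The paper also explicitly reduces to $\dim R/I \gs 1$, which is the clean way to dispose of the degenerate case $I^{(n)}_J = R$; you should state this reduction rather than leave it implicit.

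For the ``in particular'' clause there is a genuine gap. You write: ``If $IR_\fp$ were a complete intersection, then $R_\fp/(IR_\fp)^n$ would be Cohen--Macaulay and its associated primes would coincide with $\Min(IR_\fp)$.'' That deduction requires $R_\fp$ to be Cohen--Macaulay. If $I_\fp$ is generated by a regular sequence of length $c = \height I_\fp$, one has $\Ass(R_\fp/I_\fp^n) = \Ass(R_\fp/I_\fp)$ for all $n$ because the associated graded ring is $R_\fp/I_\fp$-free, but $\fp R_\fp \in \Ass(R_\fp/I_\fp)$ exactly when $\depth R_\fp = c$, which can happen for a non-Cohen--Macaulay $R_\fp$ even though $\dim R_\fp > c$. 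Theorem~\ref{gen} (and hence this corollary) only assumes $R$ analytically unramified and formally equidimensional, which does not force $R_\fp$ to be Cohen--Macaulay. The paper's own one-sentence justification leaves this point unaddressed as well, so you are at least in good company, but if you want your expansion to be airtight you should either add a Cohen--Macaulay hypothesis on $R$ (or the relevant localizations), or spell out the precise meaning of ``complete intersection'' being used and check that it yields $\fp R_\fp \notin \Ass(R_\fp/I_\fp^n)$ without Cohen--Macaulayness. The catenarity/equidimensionality step at the end, converting $\height\fp \gs \dim R - 2$ into $\dim R/\fp \ls 2$ via Ratliff, is correct.
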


\begin{proof}
We may assume $\dim R/I\gs 1$.  Then we have $\dim I^{(n)}_J/I^n \ls \dim R/J\ls 2$ for every $n\gs 1$ and $\depth R/I^{(n)}_J \gs  1$. The result now follows by Theorem \ref{gen}. 

For the second statement, we note that this condition implies that for every  $\fp \in \Ass^\infty(I)\setminus \Min(I)$ we have $\dim R/\fp \ls 2$.
\end{proof}

Under some extra conditions we are able to provide a  better bound for the symbolic analytic spread.

\begin{prop}\label{extr}
Assume $R$ is  analytically unramified and formally equidimensional, and that any of the following  conditions holds. 
\begin{enumerate}
\item[(1)] $\ell(I_\fp)<\height \fp$  for every $\fp \in V(I)\setminus\Min(I)$  with $\fp \neq \fm$ and $I^n_\fp$ is integrally closed for every $\fp\in \Min(I)$ and $n\gg 0$.
\item[(2)]  $R$ is a domain and $\ell(I_\fp)<\height \fp$  for every $\fp\in \Spec R$ that contains a prime ideal in $\Ass^\infty(I)\setminus \Min(I)$.
\end{enumerate}
Then $\sell(I)\ls \ell(I)$.
\end{prop}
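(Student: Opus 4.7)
The plan is to sharpen the bound of Theorem \ref{gen} by one power of $n$, so that $\mu(I^{(n)})=O(n^{\ell(I)-1})$. Setting $M_n=I^{(n)}/I^n$ and using $\mu(I^{(n)})\le \mu(I^n)+\mu(M_n)$ together with $\mu(I^n)=O(n^{\ell(I)-1})$, the task reduces to showing $\mu(M_n)=O(n^{\ell(I)-1})$.

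For condition (1), the first hypothesis combined with McAdam's theorem applied locally at each $\fp$ forces $\Ass^\infty(I)\setminus\Min(I)\subseteq\{\fm\}$. Together with the unmixedness of $\overline{I^n}$ (Ratliff, valid since $R$ is analytically unramified and formally equidimensional) and the second hypothesis that $I^n_\fp$ is integrally closed at each $\fp\in\Min(I)$, this identifies $I^{(n)}=\overline{I^n}$ for $n\gg 0$: both ideals are unmixed with associated primes contained in $\Min(I)$, and they coincide after localizing at each minimal prime. Since $R$ is analytically unramified, $\bigoplus_n \overline{I^n}t^n$ is a finite module over $\bigoplus_n I^nt^n$, so its fiber ring has dimension $\ell(I)$, and a standard Hilbert function argument then gives $\mu(\overline{I^n})=O(n^{\ell(I)-1})$.

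For condition (2), the approach is to rerun the proof of Theorem \ref{gen} with $\I=\{I^{(n)}\}$ and extract a one-power improvement from Lemma \ref{sats}. For each $\fp\in\Ass(M_n)\subseteq\Ass^\infty(I)\setminus\Min(I)$ appearing in the associativity formula for $e(M_n)$, condition (2) gives $\ell(I_\fp)<\height\fp$, hence $\tilde{\ell}(I_\fp)=\ell(I_\fp)$, and Lemma \ref{sats} localized at $\fp$ sharpens the bound to $\lambda((M_n)_\fp)=O(n^{\ell(I_\fp)-1})=O(n^{\ell(I)-1})$. The same improvement applies at $\fp=\fm$, giving $\lambda(\HH{0}{\fm}{R/I^n})=O(n^{\ell(I)-1})$; if $\Ass^\infty(I)\setminus\Min(I)$ is empty then $I^{(n)}=I^n$ eventually and the conclusion is trivial. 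Associativity then yields $e(M_n)=O(n^{\ell(I)-1})$.

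The main obstacle under condition (2) is the $\HH{0}{\fm}{M_n'/(\ul x)M_n'}$ term in Proposition \ref{keybound}: the proof of Theorem \ref{gen} used a depth hypothesis to force $\Tor_1^{R'}(R'/I^{(n)},R'/(\ul x))=0$ and thereby embed $M_n'/(\ul x)M_n'$ into $\overline{R'}/I^n\overline{R'}$, but no such vanishing is available here. The plan is to bound the Tor-kernel separately by exploiting that its support is contained in $\Supp(M_n)$, on which condition (2) controls the local analytic spreads, and to invoke the same sharpened local-cohomology estimate on that kernel. A secondary technical point is to verify that the hypotheses of Lemma \ref{sats} descend to the localizations $R_\fp$ used above, which is clean when $R$ is excellent.
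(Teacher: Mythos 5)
Your proposal takes a genuinely different route from the paper's on both parts, but each part has a gap.

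For condition (1): you claim that the hypotheses force $I^{(n)}=\overline{I^n}$ for $n\gg 0$, justified by asserting that $\overline{I^n}$ is unmixed. This fails when $\ell(I)=\dim R$: in that case $\fm\in\Ass(R/\overline{I^n})$ for $n\gg 0$ (by McAdam's theorem in the forward direction), so $\overline{I^n}$ has $\fm$ as an embedded prime and $\overline{I^n}\subsetneq I^{(n)}$. (Also, McAdam controls the set $\Ass(R/\overline{I^n})$, not $\Ass^\infty(I)=\Ass(R/I^n)$, so the intermediate assertion that $\Ass^\infty(I)\setminus\Min(I)\subseteq\{\fm\}$ is not what the cited theorem gives.) The paper sidesteps the equality entirely: from the hypotheses one only extracts $\overline{I^n}\subseteq I^{(n)}$ with $\lambda(I^{(n)}/\overline{I^n})<\infty$ (citing Giorgi), and then uses that $\overline{I^n}$ is $\fm$\emph{-full}, which by Goto's theorem yields $\mu(I^{(n)})\ls\mu(\overline{I^n})$ for \emph{any} ideal containing $\overline{I^n}$ by a finite-length module. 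The $\fm$-fullness step is the missing idea that makes the argument work uniformly, including the $\ell(I)=\dim R$ case; combined with $\mu(\overline{I^n})=O(n^{\ell(I)-1})$ it gives the bound.

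For condition (2): you correctly identify the obstruction to rerunning the proof of Theorem \ref{gen} with a one-power improvement (the $\HH{0}{\fm}{M_n'/(\ul x)M_n'}$ term can no longer be controlled by embedding into $\overline{R'}/I^n\overline{R'}$, since no depth hypothesis on $R/I^{(n)}$ is available), but you leave that obstruction unresolved -- the ``plan to bound the Tor-kernel separately'' is a sketch, not an argument, and it is not clear it works. The paper does not follow this route at all: for (2) it invokes a result of Aberbach--Polstra (building on Cutkosky--Herzog--Srinivasan) asserting that under exactly these hypotheses the symbolic Rees algebra $\R^s(I)$ is Noetherian and $\ell(I^{(n)})\ls\ell(I)$ for $n\gg 0$, and then concludes via Remark \ref{someC} (which says that for Noetherian $\R^s(I)\otimes_R k$ the symbolic analytic spread equals $\ell(I^{(nc)})$ for suitable $c$). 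So the paper's proof of (2) is a direct reduction to existing literature on finite generation of symbolic Rees algebras rather than a superficial-element/Hilbert-function computation, and this is what your approach is missing.
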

\begin{proof}
We begin with (1). Let $\overline{J}$ denote the integral closure of the ideal $J$. The assumptions on $I$ imply $\overline{I^n}\subseteq I^{(n)}$  and $\lambda(I^{(n)}/\overline{I^n})<\infty$ for every  $n\gg 0$ \cite[2.4, 2.8]{Gi}.  Since $\overline{I^n}$ is {\it $\fm$-full}, we have $\mu(I^{(n)})\ls \mu(\overline{I^n})$ \cite[2.2]{Goto}. The conclusion now follows by noticing that $\mu(\overline{I^n})=O(n^{\ell(I)-1})$ \cite[9.2.1]{HS}.

We continue with (2). Under these assumptions in \cite[2.7]{AP} it is proved, via \cite[2.1]{CHS}, that $\R^s(I) $ is Noetherian and $\ell(I^{(n)})\ls \ell(I)$ for every $n\gg 0$. The conclusion now follows by Remark \ref{someC}.
\end{proof}

\begin{remark}\label{rem_extr}
The  assumptions  in Proposition \ref{extr} (1) are satisfied if $R$ is regular, $R/I$ is reduced, and $\Ass^\infty(I)\subseteq \Min(I)\cup\{\fm\}$. The latter condition holds, in particular, if $R/I$ is locally a complete intersection.
\end{remark}

It is worth noting a very particular case of Proposition \ref{extr} (1). Namely, when $I$ is locally a  principal ideal on $\Spec R\setminus \{\fm\}$. In other words, $I$ represents a Cartier divisor on the punctured spectrum. In this case, the symbolic analytic spread can be viewed as the local version of the Kodaira dimension of that divisor. In view of this we state the next corollary, which  we also apply in  Subsection \ref{FrobC}. 

\begin{cor}\label{divisor}
Assume $R$ is  analytically unramified and formally equidimensional. Assume that  $I_{\fp}$ is a  principal ideal for  every $\fp \in \Spec R\setminus \{\fm\}$ and that $R_\fp$ is normal for every $\fp\in \Min(I)$. Then $\sell(I)\ls \ell(I)$. 
\end{cor}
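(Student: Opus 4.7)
The plan is to apply Proposition~\ref{extr}(1), which reduces the corollary to verifying its two hypotheses: an integral closure condition at the minimal primes of $I$, and the analytic-spread inequality $\ell(I_\fp) < \height \fp$ off the minimal locus.

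For the integral-closure condition, fix $\fp \in \Min(I)$. Then $IR_\fp$ is a principal $\fp R_\fp$-primary ideal inside the normal local ring $R_\fp$, and Krull's Principal Ideal Theorem forces $\dim R_\fp \leq 1$. Normality then pins $R_\fp$ down to be either a field (with $IR_\fp = 0$) or a discrete valuation ring (with $IR_\fp$ a power of the maximal ideal), and in either case $I^n R_\fp$ is integrally closed for every $n$.

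For the analytic-spread condition, fix $\fp \in V(I) \setminus \Min(I)$ with $\fp \neq \fm$. The principality of $I_\fp$ gives $\ell(I_\fp) \leq 1$, so it suffices to show $\height \fp \geq 2$. Pick $\fq \in \Min(I)$ with $\fq \subsetneq \fp$; by the previous step $\height \fq \leq 1$, and in the main case $\height \fq = 1$ we conclude $\height \fp \geq 2 > \ell(I_\fp)$, as required.

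The main obstacle is the degenerate case $\height \fq = 0$, that is, $\fq \in \Min(I) \cap \Min(R)$: here $R_\fq$ is a field and $IR_\fq = 0$, while the inequality $\height \fp \geq 2$ need not hold, so Proposition~\ref{extr}(1) does not apply directly. I plan to handle this by separating $\Min(I)$ into ``field-type'' primes (where $R_\fq$ is a field) and ``DVR-type'' primes, splitting the primary decomposition of $I^{(n)}$ accordingly as $I^{(n)} = A \cap B_n$, observing that the field-type contribution $A$ is a fixed radical ideal independent of $n$, and applying Proposition~\ref{extr}(1) to the system $\{B_n\}$ whose relevant minimal primes all have height one. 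Combining the constant bound on $\mu(A)$ with the estimate $\mu(B_n) = O(n^{\ell(I)-1})$ yields $\mu(I^{(n)}) = O(n^{\ell(I)-1})$, which is the desired inequality.
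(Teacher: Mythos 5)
Your first two steps match the paper's (implicit) proof, which simply treats the corollary as a special case of Proposition~\ref{extr}(1): principality of $I_\fp$ at minimal primes plus normality of $R_\fp$ pins $R_\fp$ down to a field or a DVR, giving the integral-closure hypothesis; and for $\fp\in V(I)\setminus\Min(I)$, $\fp\neq\fm$, principality gives $\ell(I_\fp)\le 1$ while a chain through a height-one $\fq\in\Min(I)$ gives $\height\fp\ge 2$. Your worry about the degenerate case $\fq\in\Min(I)\cap\Min(R)$ is also legitimate and goes beyond what the paper says: for instance in $R=k[[x,y,z]]/(xy)$ with $I=(x)$, one has $\fp=(x,y)\in V(I)\setminus\Min(I)$ with $\height\fp=1=\ell(I_\fp)$, so Proposition~\ref{extr}(1) does not literally apply even though all stated hypotheses of the corollary hold. (The paper's phrase ``$I$ represents a Cartier divisor'' suggests the intended reading is that $I_\fp$ is generated by a non-zerodivisor; this forces $\height\fq=1$ for every $\fq\in\Min(I)$, and the degenerate case vanishes.)

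The patch you propose for the degenerate case, however, has a genuine gap. Having written $I^{(n)}=A\cap B_n$ with $A$ fixed and, supposedly, $\mu(B_n)=O(n^{\ell(I)-1})$, the concluding step ``combining the constant bound on $\mu(A)$ with the estimate on $\mu(B_n)$'' does not produce any bound on $\mu(A\cap B_n)$: there is no general inequality controlling the number of generators of an intersection of two ideals by the numbers of generators of the pieces, so $\mu(I^{(n)})=O(n^{\ell(I)-1})$ does not follow from what you have. Moreover, $\{B_n\}$ is not literally the system of symbolic powers of a fixed ideal, so ``applying Proposition~\ref{extr}(1) to the system $\{B_n\}$'' requires justification: one must exhibit an ideal $J$ with $J^{(n)}=B_n$, check that $J$ satisfies the hypotheses of the proposition, and verify $\ell(J)\le\ell(I)$. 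Either supply these missing arguments, or, more in the spirit of the paper, add the hypothesis that $I$ contains a non-zerodivisor (equivalently $\height I\ge 1$), after which your first two steps already constitute a complete proof.
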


The following is our second main theorem. In this result, we obtain a better bound for the analytic spread of a system of ideals under stronger assumptions on the depth of its ideals. This resullt is inspired by Dutta's original proof in \cite{Dutta}. 

\begin{thm}\label{duttaLike}
Let $\I=\{I_n\}_{n\in \NN}$ be a system of ideals such that $ I^n\subseteq I_n$ for all $n\gs 0$. Set $r=\dim R/I$.
\begin{enumerate}
\item If $\depth R\gs r$ and $\depth R/I_n\gs r-1$ for $n\gg 0$, then $\ell(\I) \ls \dim R-r+1$.
\item   If $\depth R\gs r+1$ and  $\depth R/I_n = r$, i.e., $R/I_n$ is Cohen-Macaulay, for $n\gg 0$, then $\ell(\I) \ls \dim R-r$.
\end{enumerate}

\end{thm}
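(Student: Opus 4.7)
The plan is to extend the argument of Theorem~\ref{gen} by exploiting the stronger depth hypotheses to replace generic superficial elements with genuine regular sequences at the crucial step. Writing $s=r-1$ in part (1) and $s=r$ in part (2), I would first pass to the faithfully flat extension $R'$ from the proof of Theorem~\ref{gen} and use generic linear combinations $x_i=\sum_j z_{i,j}f_j$ to construct a sequence $x_1,\ldots,x_s\in\fm R'$ which, for all $n\gg 0$, is a regular sequence on both $R'$ and $R'/I_n'$. Such a sequence exists by standard prime avoidance, since $\depth R\gs s$ and $\depth R/I_n\gs s$ in each case. Setting $\overline{R}=R'/(\underline{x})$ and $\overline{I_n}=(I_n+(\underline{x}))/(\underline{x})$, the regularity of $\underline{x}$ on $R'/I_n'$ gives Tor vanishing, so $I_n/(\underline{x})I_n\cong\overline{I_n}$ as $\overline{R}$-modules, whence $\mu(I_n)=\mu_{\overline{R}}(\overline{I_n})$.

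Choosing the $x_i$ as generic parameters modulo $I$ also ensures $\dim\overline{R}/\overline{I_n}\ls r-s$, which is $1$ in part (1) and $0$ in part (2); in particular $\overline{I_n}$ has full dimension $d-s$ in $\overline{R}$, and in part (2) it is $\fm\overline{R}$-primary. Moreover $\depth\overline{R}\gs 1$ in both parts. I would then apply Proposition~\ref{keybound} to $\overline{I_n}$ with a generic superficial sequence $\underline{y}$ of length $d-s-1$ drawn from the same flat extension, yielding
\[
\mu_{\overline{R}}(\overline{I_n})\ls\mu\bigl(\HH{0}{\fm}{\overline{I_n}/(\underline{y})\overline{I_n}}\bigr)+e(\overline{I_n}).
\]
Since $\dim\overline{R}/\overline{I_n}<\dim\overline{R}$, the additivity of multiplicity yields $e(\overline{I_n})=e(\overline{R})$, a constant in $n$. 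To handle the local cohomology term I would factor the natural map $\overline{I_n}/(\underline{y})\overline{I_n}\to\overline{R}/(\underline{y})$ through its image and kernel: the image, as a submodule of $\overline{R}/(\underline{y})$, has $\HH{0}{\fm}{-}$ of length at most $\lambda(\HH{0}{\fm}{\overline{R}/(\underline{y})})$, which is independent of $n$ because $\underline{y}$ was fixed in the flat extension; the kernel equals $\Tor_1^{\overline{R}}(\overline{R}/\overline{I_n},\overline{R}/(\underline{y}))$ and is bounded via the Koszul complex on $\underline{y}$ with coefficients in $\overline{R}/\overline{I_n}$.

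The main obstacle I anticipate is controlling the Tor piece at the sharp polynomial rate. In part (2), $\overline{R}/\overline{I_n}$ already has finite length equal to $e_{(\underline{x})}(R/I_n)$ by Cohen--Macaulayness of $R/I_n$, and a direct Koszul estimate yields $\mu(\overline{I_n})=O(n^{d-r-1})$, giving $\ell(\I)\ls d-r$. In part (1), however, $\overline{R}/\overline{I_n}$ can have positive dimension, so the Koszul bound by length does not apply directly; the remedy is to split $\overline{R}/\overline{I_n}$ into its pure-dimensional-$1$ part (whose contribution to $\HH{0}{\fm}{-}$ of the Koszul $H_1$ remains bounded independently of $n$) and its finite-length part (which grows no faster than the Hilbert--Samuel function of the $\fm\overline{R}$-primary ideal $\overline{I}=(I+(\underline{x}))/(\underline{x})$, of order $n^{d-r}$). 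Arranging this split consistently with the genericity conditions on $\underline{y}$ that guarantee partial regularity on the relevant auxiliary quotients is the delicate point of the proof.
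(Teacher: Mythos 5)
Your outline takes a genuinely different route from the paper. The paper picks, after completion and countable prime avoidance, a sequence $\underline{x}$ regular on $R$ of length $r$ (part (1)) or $r+1$ (part (2)), of which only the first $r-1$ (resp.\ $r$) elements are required to be regular on $R/I_n$; it then tensors the short exact sequence $\ses{\fm/\underline{x}}{R/\underline{x}}{R/\fm}$ with $R/I_n$ and uses Koszul depth sensitivity together with the non-negativity of the Euler characteristic to replace $\lambda(\Tor_1^R(R/I_n,R/\underline{x}))$ by $\lambda(R/(I_n+\underline{x}))$, a Hilbert--Samuel quantity on $R/\underline{x}$ of dimension $d-r$ (resp.\ $d-r-1$). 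The whole point is that the extra hypothesis $\depth R\gs\depth R/I_n+1$ buys one more element in $\underline{x}$, which drops the relevant dimension by one. By insisting that $\underline{x}$ be regular on both $R'$ and $R'/I_n'$, you can only take it of length $r-1$ (resp.\ $r$), and this lost degree reappears in your analysis and is never recovered.

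Concretely, the gap is in controlling the kernel term $T_n=\Tor_1^{\overline{R}}(\overline{R}/\overline{I_n},\overline{R}/(\underline{y}))$. In part (2), $T_n$ is a subquotient of $K_1(\underline{y};\overline{R}/\overline{I_n})\cong(\overline{R}/\overline{I_n})^{\oplus(d-r-1)}$, so the ``direct Koszul estimate'' only gives $\lambda(T_n)\ls(d-r-1)\,\lambda(\overline{R}/\overline{I_n})$, and $\lambda(\overline{R}/\overline{I_n})=e_{(\underline{x})}(R/I_n)$ grows like $n^{d-r}$ (by the associativity formula and the Hilbert function of $R_\fp/I_\fp^n$ for $\fp\in\Min(I)$ with $\height\fp=d-r$). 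This is one degree too high; you need $O(n^{d-r-1})$. You cannot rescue it with the Euler-characteristic trick the paper uses, because here $\overline{R}/\overline{I_n}$ has depth $0$, so the higher Koszul homologies $H_i(\underline{y};\overline{R}/\overline{I_n})$ for $i>1$ do not vanish and $\chi\gs 0$ gives no bound on $H_1$ alone. In part (1) the situation is worse: the finite-length piece $\HH{0}{\fm}{\overline{R}/\overline{I_n}}$ is not a quotient of any obvious module built from $I^n$ (local cohomology is not right-exact), so bounding it by a Hilbert--Samuel function of degree $d-r$ is not automatic, and the claim that the pure one-dimensional part contributes only $O(1)$ to $\HH{0}{\fm}{H_1(\underline{y};-)}$ is left entirely unjustified. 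A smaller issue: to find a single sequence regular on $R'/I_n'$ for all $n\gg 0$ you need countable prime avoidance, which is known after completion (as the paper does, via Burch), but is not clear in $R'=R[\underline{z}]_{\fm R[\underline{z}]}$ when $k$ is countable.
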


\begin{proof}
We begin with (1). After passing to  the $\fm$-adic completion $\hat{R}$ of $R$  all of the assumptions are preserved, then we can assume $R$ is complete. By countable prime avoidance \cite[Lemma 3]{Burch}, there exists an $R$-regular sequence $\underline x= x_1,...,x_{r}$ such that $R/(I,\underline x)$ has finite length, and $x_1,...,x_{r-1}$ is regular on $R/I_n$ for $n\gg 0$. 

Tensoring the short exact sequence $ \ses{\fm/{\underline x}}{R/{\underline x}}{R/\fm}$
with $R/I_n$ we obtain 
\begin{align*}
\mu(I_n) =\Tor_1^R(R/I_n,R/\fm) &\ls \lambda(R/I_n \otimes_R \fm/{\underline x}) + \lambda(\Tor_1^R(R/I_n, R/\underline x))\\
&\ls\lambda(R/I^n \otimes_R \fm/{\underline x}) + \lambda(\Tor_1^R(R/I_n, R/\underline x)). \numberthis \label{theeqn}
\end{align*} We now show that the  two terms of \eqref{theeqn} are $O(n^{d-r})$. For the first term this follows from \cite[4.6.2]{BH} as 
$\dim  \fm/{\underline x} = d-r$. For the second term,  by \cite[1.6.17]{BH}  we have $\Tor_i^R(R/I_n, R/\underline x)=0$ for  every $i>1$ and $n\gg 0$. Thus, by the non-negativity of Euler characteristic \cite[4.7.4]{BH}, we have $$\lambda(\Tor_1^R(R/I_n, R/\underline x)) \ls \lambda(R/I_n \otimes_R R/{\underline x}) \ls \lambda(R/I^n \otimes_R R/{\underline x}),$$ and this last term is $O(n^{d-r})$ by \cite[4.6.2]{BH}. This concludes the proof of (1).

The proof of (2) follows the  same steps as (1), but considering a longer $R$-regular sequence $x_1,\ldots, x_{r+1}$.
\end{proof}

\begin{cor}\label{duttaCor}
 Set $r=\dim R/I$. 
\begin{enumerate}
\item[(1)] If $\depth R\gs r$ and $\depth R/I^{(n)} \gs r-1$ for $n\gg 0$, then $\sell(I) \ls \dim R-r+1$.
\item[(2)]   If $\depth R\gs r+1$ and  $R/I^{(n)}$ is Cohen-Macaulay for $n\gg 0$, then $\sell(I) \ls \dim R-r$.
\end{enumerate}
\end{cor}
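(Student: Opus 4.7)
The plan is to deduce Corollary \ref{duttaCor} as a direct application of Theorem \ref{duttaLike} to the system $\I=\{I^{(n)}\}_{n\in \NN}$ of symbolic powers of $I$. To do so, I need only verify that this system satisfies the hypotheses of the theorem in each case.

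First, I would observe that $\sell(I)=\ell(\I)$ by Definition \ref{keydef}, so the bounds on $\sell(I)$ we want to obtain are exactly the bounds on $\ell(\I)$ provided by Theorem \ref{duttaLike}. Next, the inclusion $I^n\subseteq I^{(n)}$ holds for every $n\gs 0$ (both sides equal $R$ at $n=0$, and for $n\gs 1$ this is immediate from the definition of symbolic powers as a contraction from localizations). Finally, the dimension parameter in Theorem \ref{duttaLike} is $\dim R/I$, which is exactly $r$; note also that $\dim R/I^{(n)}=\dim R/I=r$ for all $n\gs 1$ since $I$ and $I^{(n)}$ share the same minimal primes.

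For part (1), the hypothesis $\depth R\gs r$ is given, and the assumption $\depth R/I^{(n)}\gs r-1$ for $n\gg 0$ is exactly the depth hypothesis (2) of Theorem \ref{duttaLike}(1). Thus that theorem gives $\ell(\I)\ls \dim R-r+1$, which yields (1). For part (2), the hypothesis $\depth R\gs r+1$ is given, and the Cohen-Macaulayness of $R/I^{(n)}$ (for $n\gg 0$) combined with $\dim R/I^{(n)}=r$ means $\depth R/I^{(n)}=r$ for $n\gg 0$, which is precisely the depth hypothesis of Theorem \ref{duttaLike}(2). Therefore $\ell(\I)\ls \dim R-r$, proving (2).

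There is no essential obstacle here: the corollary is a formal specialization of the general result to the system of symbolic powers. The only minor point to be careful with is confirming $\dim R/I^{(n)}=\dim R/I$, so that the parameter $r$ in Theorem \ref{duttaLike} (implicitly determined by the system, via $\dim R/I$ in the theorem's statement) agrees with the $r$ in the corollary.
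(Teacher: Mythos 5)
Your proof is correct and is precisely the intended one: the corollary is stated in the paper without a separate proof because it is exactly the specialization of Theorem \ref{duttaLike} to $\I=\{I^{(n)}\}$, verified the same way you do (checking $I^n\subseteq I^{(n)}$ and matching the depth hypotheses).
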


In the following subsections we give two important applications of our results.

\subsection{Set-theoretic complete intersection}

Let $I$ be an $R$-ideal and $g_1,\ldots, g_u$ elements of $R$ such that $\sqrt{I}=\sqrt{\underline{g}}$. By Krull's Altitude Theorem it follows that $ \height I\ls u$. Therefore we always have an inequality $\height I\ls \ara(I)$. The ideal $I$ is said to be a {\it set-theoretic complete intersection} if the equality occurs.

A well-known result of Cowsik, which follows from Proposition \ref{ariRank}, states that if $R$ is a regular local ring and $I$ is a one-dimensional ideal such that $\R^s(I)=\oplus_{n\in \NN}I^{(n)}t^n$ is Noetherian, then $I$ is a set-theoretic complete intersection.   In a related result \cite[2.9]{Varb}, Varbaro shows that if $R=k[x_1,\ldots, x_d]_{(x_1,\ldots, x_d)}$ is the localization of a polynomial ring and $I$ is the {\it Stanley-Reisner ideal} of a {\it matroid}, then $I$ is a set-theoretic complete intersection. 

The following corollary  extend these results.

\begin{cor}\label{SCICor}
Assume $R$ is formally equidimensional and $\depth R\gs \dim R/I +1$ (e.g. $R$ is Cohen-Macaulay and $\height I\gs 1$). If $\R^s(I)$ is Noetherian and $R/I^{(n)}$ is Cohen-Macaulay for $n\gg 0$, then $I$ is a set-theoretic complete intersection.
\end{cor}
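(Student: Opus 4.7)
The plan is to chain together Proposition \ref{ariRank}, Corollary \ref{duttaCor}(2), and Krull's Altitude Theorem, with formal equidimensionality translating between $\dim R/I$ and $\height I$. Set $r = \dim R/I$. The hypothesis $\depth R \geq r+1$ in particular implies $\depth R \geq \min\{r,1\}$, so since $\R^s(I)$ is Noetherian we may apply Proposition \ref{ariRank} to get
\[
\ara(I) \leq \sell(I).
\]

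Next, I would invoke Corollary \ref{duttaCor}(2): the assumptions $\depth R \geq r+1$ and $R/I^{(n)}$ Cohen-Macaulay for $n \gg 0$ are precisely what is needed, giving
\[
\sell(I) \leq \dim R - r.
\]
Now I would use formal equidimensionality of $R$, which guarantees that every minimal prime $\fp$ of $I$ satisfies $\height \fp + \dim R/\fp = \dim R$. Taking $\fp \in \Min(I)$ with $\dim R/\fp = r$ yields $\height I \leq \dim R - r$; on the other hand, for any minimal prime of $I$, $\dim R/\fp \leq r$, so $\height \fp \geq \dim R - r$, and therefore $\height I = \dim R - r$.

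Combining the three inequalities gives $\ara(I) \leq \sell(I) \leq \dim R - r = \height I$. Since Krull's Altitude Theorem provides the reverse inequality $\height I \leq \ara(I)$ (as recalled at the beginning of the subsection), all quantities coincide and $I$ is a set-theoretic complete intersection.

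I do not foresee a genuine obstacle here: the result is a straightforward synthesis of the two main theorems of the section. The only point requiring care is verifying that the hypotheses of Proposition \ref{ariRank} and Corollary \ref{duttaCor}(2) are indeed met simultaneously — in particular, that the depth assumption $\depth R \geq r+1$ is strong enough to feed into both results — and that formal equidimensionality (rather than a stronger catenarity or unmixedness assumption) suffices to identify $\height I$ with $\dim R - \dim R/I$.
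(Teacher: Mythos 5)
Your proposal is correct and follows essentially the same route as the paper: chain $\ara(I) \leq \sell(I)$ (Proposition \ref{ariRank}) with $\sell(I) \leq \dim R - \dim R/I$ (Corollary \ref{duttaCor}(2)), identify $\dim R - \dim R/I = \height I$ via formal equidimensionality, and close the loop with $\height I \leq \ara(I)$. Your write-up is simply more explicit than the paper's about why formal equidimensionality yields $\height I = \dim R - \dim R/I$, and about checking the hypotheses of the two results being combined.
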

\begin{proof}
By Proposition \ref{ariRank} and  Corollary \ref{duttaCor}  we have $$\ara(I)\ls \sell(I)\ls \dim R - \dim R/I=\height I\ls \ara(I),$$
the result follows.
\end{proof}

\subsection{The Frobenius complexity}\label{FrobC}

In this subsection we assume $(R,\fm,k)$ is a Noetherian local ring of characteristic $p>0$.  For every $e\in \NN$ we consider $\C_e^R$, the set of additive maps $\varphi:R\to R$ such that $\varphi(r^{p^e}m)=r\varphi(m)$ for every $r,m\in R$. The direct sum $\C^R=\oplus_{n\in \NN}\C_e^R$ has a graded algebra structure and is called the {\it total Cartier algebra on $R$} \cite[6.1]{KSSZ}. 

For every $e\in \ZZ_{>0}$, let $G_e$ be the subalgebra of $\C^R$ generated by the elements of degree $\ls e$ and  $c_e(R)$  the minimal number of generators of the $R$-module of $\C^R_e/[G_{e-1}]_e$. The  {\it Frobenius complexity} of $R$ is defined as 
$$\cx_F(R)=\inf\{\alpha\in \RR \mid c_e(R) = O(p^{\alpha e}) \}.$$
We remark that here we are using the alternative definition of Frobenius complexity as in \cite[Section 4]{EY2}, which was later adopted  in \cite{Page} and \cite{EP}. If $R$ is $F$-finite and complete this definition coincides with the original one introduced by Enescu and Yao in \cite{EY1}.

In \cite[4.11]{EY1} the following question is raised.

\begin{question}[Enescu-Yao]\label{EYquestion}
Assume $R$ is complete and normal, is $\cx_F(R)$ finite?
\end{question}

In \cite{EP}, the authors show that the Frobenius complexity is finite for standard graded rings over an $F$-finite field localized at the irrelevant maximal ideal. However, in general $\cx_F(R)$ is not known to be finite, except when the {\it anticanonical cover}  is Noetherian \cite[4.7]{EY1}, when $\dim R\ls 2$ (in this case $\cx_F(R)\ls 0$) \cite[4.10]{EY1}, and in other particular cases \cite{EY2,Page}.

The following proposition, combined with our main results, allows us to answer Question \ref{EYquestion} for rings of small dimension. The proof of the following statement is essentially contained in \cite[2.6]{Page}.

\begin{prop}\label{FrobCsl}
Assume $R$ is normal and has a canonical ideal $\omega$. Then $\cx_F(R)\ls \sell(\omega^{(-1)})-1$.
\end{prop}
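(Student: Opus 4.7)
The plan rests on the standard identification, valid for any normal local ring of characteristic $p > 0$ with canonical ideal $\omega$, between the graded components of the Cartier algebra and Frobenius pushforwards of symbolic powers of the anticanonical ideal. Setting $J = \omega^{(-1)}$, Grothendieck duality for the Frobenius yields an isomorphism of $R$-modules
\[
\C_e^R \;\cong\; F^e_*\!\left( J^{(p^e-1)} \right),
\]
and moreover the composition product $\C_i^R \otimes_R \C_j^R \to \C_{i+j}^R$ in the total Cartier algebra is intertwined, up to a Frobenius twist, with the natural multiplication in $\R^s(J) = \oplus_{n \gs 0} J^{(n)}$ of the form $a \otimes b \mapsto a \cdot b^{p^i} \in J^{(p^i-1)} \cdot J^{(p^i(p^{j}-1))} \subseteq J^{(p^{i+j}-1)}$.

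The first step is to carry this bookkeeping through to the subalgebra filtration, following Page's argument in \cite[2.6]{Page}. The degree-$e$ part $[G_{e-1}]_e$ corresponds to the $R$-submodule $K_e \subseteq J^{(p^e-1)}$ generated by the products above for $1 \ls i \ls e-1$, so that
\[
\C_e^R / [G_{e-1}]_e \;\cong\; F^e_*\!\left( J^{(p^e-1)} / K_e \right).
\]
The technical core of \cite[2.6]{Page} is then the observation that the Frobenius twists occurring in these products absorb enough of $\fm^{[p^e]}$ that the minimal number of $R$-generators of $F^e_*(J^{(p^e-1)}/K_e)$ is bounded above by the ``untwisted'' quantity $\mu(J^{(p^e-1)})$, giving
\[
c_e(R) \;\ls\; \mu\!\left( J^{(p^e-1)} \right).
\]

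Once this inequality is in hand, the conclusion follows directly from the definition of the symbolic analytic spread. Setting $s = \sell(J) = \sell(\omega^{(-1)})$, Definition~\ref{keydef} gives $\mu(J^{(n)}) = O(n^{s-1})$, and specializing to $n = p^e - 1$ we obtain
\[
c_e(R) \;=\; O\!\left( (p^e-1)^{s-1} \right) \;=\; O\!\left( p^{e(s-1)} \right).
\]
By the definition of the Frobenius complexity, this yields $\cx_F(R) \ls s - 1 = \sell(\omega^{(-1)}) - 1$.

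The main obstacle is the first step: obtaining the ``untwisted'' bound $c_e(R) \ls \mu(J^{(p^e-1)})$ rather than the a priori weaker estimate $c_e(R) \ls \mu_R(F^e_* J^{(p^e-1)}) = \dim_k\!\big(J^{(p^e-1)}/\fm^{[p^e]} J^{(p^e-1)}\big)$, which is too large by a factor of order $p^{e(\dim R - 1)}$. The reduction exploits the fact that products of the form $\C_1 \cdot \C_{e-1}$ (and more generally lower-degree Cartier compositions) cancel the Frobenius-power contribution modulo $K_e$; this is precisely the content of \cite[2.6]{Page}, so we invoke that lemma rather than reproving it here.
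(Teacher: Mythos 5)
Your proof is essentially correct and takes the same route as the paper: the paper itself gives no argument beyond the sentence ``The proof of the following statement is essentially contained in \cite[2.6]{Page},'' and you likewise defer the decisive step to that reference. You fill in more of the surrounding bookkeeping than the paper does: identifying $\C_e^R$ with $F^e_*(\omega^{(1-p^e)}) = F^e_*(J^{(p^e-1)})$ via duality, recognizing $[G_{e-1}]_e$ as the image of twisted products, reducing to the bound $c_e(R)\ls \mu(J^{(p^e-1)})$, and then applying Definition~\ref{keydef} with $n=p^e-1$ to get $c_e(R)=O(p^{e(\sell(J)-1)})$ and hence $\cx_F(R)\ls \sell(J)-1$. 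Two small caveats worth flagging: the exact untwisted inequality $c_e(R)\ls\mu(J^{(p^e-1)})$ cannot hold for small $e$ (e.g.\ $e=1$, where $c_1(R)=\mu_R(F_*J^{(p-1)})$ carries the full $\fm^{[p]}$-twist), so one must read Page's lemma as an asymptotic or eventual statement, which is all the $\inf$ in the definition of $\cx_F$ requires; and the multiplication convention you record ($a\otimes b\mapsto a\cdot b^{p^i}$) has the Frobenius twist on the wrong factor, though this is a bookkeeping choice that does not affect the argument. Neither issue creates a gap, since the asymptotic conclusion is insensitive to both.
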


We now state our contribution towards Question \ref{EYquestion}.

\begin{cor}\label{FrobCor}
Assume $R$ is normal and  has a canonical ideal $\omega$ (e.g. $R$ is complete). 
\begin{enumerate}
\item[(1)] If $\dim R\ls 3$, then $\cx_F(R)\ls 1$.
\item[(2)] Moreover, if $\dim R = 4$, $R$ is analytically unramified and formally equidimensional, then  $\cx_F(R)\ls \ell(\omega^{(-1)})<\infty$. 
\item[(3)] If $R$ is analytically unramified, formally equidimensional, and Gorenstein locally  on $\Spec R\setminus \{\fm\}$, then $\cx_F(R)\ls \ell(\omega^{(-1)})<\infty$.
\end{enumerate}
\end{cor}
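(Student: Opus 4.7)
The plan is to reduce each assertion, via Proposition \ref{FrobCsl}, to an upper bound on the symbolic analytic spread $\sell(\omega^{(-1)})$, and then invoke the appropriate result from the preceding part of this section. Throughout I may assume $\omega^{(-1)}$ is a proper ideal, since if $R$ is quasi-Gorenstein then $\omega^{(-n)} \cong R$ for all $n$ and the needed bounds become trivial.

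For part (1), the case $\dim R \ls 2$ is already covered by the Enescu-Yao inequality $\cx_F(R) \ls 0$ recalled immediately before Question \ref{EYquestion}, so I focus on $\dim R = 3$. Here $\omega^{(-1)}$ is a height-one divisorial ideal, and its symbolic powers in the sense of Section \ref{FirstS} agree with the reflexive (divisorial) powers $\omega^{(-n)}$. I would apply Corollary \ref{duttaCor}(1) with $r = \dim R/\omega^{(-1)} = 2$: normality implies Serre's condition $S_2$, giving $\depth R \gs 2 = r$, and the standard fact that divisorial ideals in a normal Noetherian domain have only height-one associated primes forces $\fm \notin \Ass(R/\omega^{(-n)})$, hence $\depth R/\omega^{(-n)} \gs 1 = r-1$. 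The corollary then yields $\sell(\omega^{(-1)}) \ls \dim R - r + 1 = 2$, and Proposition \ref{FrobCsl} concludes $\cx_F(R) \ls 1$.

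For part (2), the assumption $\dim R = 4$ gives $\dim R/\omega^{(-1)} = 3$, so together with analytic unramifiedness and formal equidimensionality one can apply Corollary \ref{depthCor} directly to obtain $\sell(\omega^{(-1)}) \ls \ell(\omega^{(-1)}) + 1$; Proposition \ref{FrobCsl} then yields $\cx_F(R) \ls \ell(\omega^{(-1)})$, which is finite since $\ell(\omega^{(-1)}) \ls \dim R$. For part (3), the Gorenstein hypothesis on the punctured spectrum gives $\omega_\fp \cong R_\fp$, hence $\omega^{(-1)}_\fp$ is principal, for every $\fp \neq \fm$; normality of $R$ automatically provides normality of $R_\fp$ for $\fp \in \Min(\omega^{(-1)})$. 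Corollary \ref{divisor} then applies and gives $\sell(\omega^{(-1)}) \ls \ell(\omega^{(-1)})$, and Proposition \ref{FrobCsl} finishes the proof.

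The main obstacle I anticipate is the depth input in part (1): one must justify $\depth R/\omega^{(-n)} \gs 1$ without any Cohen-Macaulay hypothesis on $R$. This rests on the classical structural result that reflexive ideals in a normal Noetherian domain have no embedded associated primes; granting that, the three parts follow as routine applications of Proposition \ref{FrobCsl} together with Corollaries \ref{duttaCor}, \ref{depthCor}, and \ref{divisor}, respectively.
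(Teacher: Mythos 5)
Your proof is correct and takes essentially the same route as the paper: each part is reduced via Proposition \ref{FrobCsl} to a bound on $\sell(\omega^{(-1)})$, which is then supplied by Corollary \ref{duttaCor} for (1), Corollary \ref{depthCor} for (2), and Corollary \ref{divisor} for (3). You simply spell out the hypothesis verifications (normality gives $S_2$ hence $\depth R\gs 2$; divisorial ideals are unmixed of height one so $\fm\notin\Ass(R/\omega^{(-n)})$; Gorenstein on the punctured spectrum gives local principality of $\omega^{(-1)}$), which the paper's one-line proof leaves implicit.
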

\begin{proof}
 The result  follows from Proposition \ref{FrobCsl} and, Corollary \ref{duttaCor} for (1), Corollary \ref{depthCor} for (2), and Corollary \ref{divisor} for (3).
\end{proof}

 \section{A bound for monomial ideals}\label{monoSec}

Let $R=k[x_1,\ldots, x_d]$ be a polynomial ring over the field $k$ and $\fm=(x_1,\ldots, x_d)$. 
 For $(a_1,\ldots, a_d)\in \NN^d$ let $\fx^\fa$ denote the monomial  $x_1^{a_1}\cdots x_d^{a_d}$. 
For a monomial prime ideal $\fp$ we denote by $\supp(\fp):=\{i\mid x_i\in \fp\}$ its {\it support}.

Let $\bght I$ denote the {\it big height} of  the ideal $I$, i.e., the largest height of an ideal in $\Min(I)$.
 
\begin{thm}\label{mainMonomial}
Let  $R=k[x_1,\ldots, x_d]$  and $I$ a monomial ideal, then 
$$\sell(I)\ls \ d - \left\lfloor\frac{d-1}{\bght I}\right\rfloor.$$
\end{thm}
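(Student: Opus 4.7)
My plan is to directly bound $\mu(I^{(n)})$ by a lattice-point count and then invoke Definition \ref{keydef}. Setting $h = \bght I$, I will show $\mu(I^{(n)}) = O(n^{d - \lceil d/h \rceil})$; by the elementary identity $\lceil d/h \rceil = \lfloor (d-1)/h \rfloor + 1$ this is the desired estimate $\mu(I^{(n)}) = O(n^{d - 1 - \lfloor (d-1)/h \rfloor})$, yielding $\sell(I) \leq d - \lfloor (d-1)/h \rfloor$. The minimal generators of $I^{(n)}$ are to be parameterized by combinatorial data coming from the minimal primes of $I$ together with facets of Newton polyhedra of their ``local components.''

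Let $\fp_1, \ldots, \fp_s$ be the minimal primes of $I$, with $\fp_i = (x_j : j \in S_i)$ and $|S_i| \leq h$. For each $i$, let $R^{(i)} := k[x_j : j \in S_i]$ and $\pi_i: R \to R^{(i)}$ be the map sending $x_j \mapsto 1$ for $j \notin S_i$; by abuse, also write $\pi_i(\fa) \in \NN^{S_i}$ for the restriction of the exponent vector $\fa$ to $S_i$-coordinates. Then $J_i := \pi_i(I)$ is a monomial ideal, primary to the maximal ideal $(x_j : j \in S_i) R^{(i)}$. Since $\prod_{j \notin S_i} x_j^{a_j}$ is a unit in $R_{\fp_i}$, one has $I^n R_{\fp_i} = J_i^n R_{\fp_i}$, and a direct monomial computation gives $J_i^n R_{\fp_i} \cap R^{(i)} = J_i^n$; hence $\fx^\fa \in I^{(n)}$ if and only if $\fx^{\pi_i(\fa)} \in J_i^n$ for all $i$. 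Now let $\fx^\fa$ be a minimal generator with $T := \supp(\fa)$. For each $j \in T$, there exists $i = i(j)$ with $j \in S_i$ and $\fx^{\pi_i(\fa) - \fe_j} \notin J_i^n$; this forces $\pi_i(\fa)$ to lie on a facet $F$ of $n \cdot \operatorname{NP}(J_i)$ whose outer normal $\alpha_F \in \RR^{S_i}_{\geq 0}$ has $\alpha_{F, j} > 0$. Each such facet has defining equation $\sum_{j' \in S_i} \alpha_{F, j'} y_{j'} = n c_F$ supported on at most $|S_i| \leq h$ variables. I then attach to $\fa$ the datum $(L, \{F_i\}_{i \in L})$ consisting of the collected indices $L \ni i(j)$ together with their chosen facets, ensuring $\bigcup_{i \in L} \supp(\alpha_{F_i}) \supseteq T$.

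The heart of the argument is the following rank lemma: the system of $|L|$ linear equations $\{F_i = 0\}_{i \in L}$ on $\RR^T$ has rank $r \geq \lceil |T|/h \rceil$. Indeed, select $r$ linearly independent equations; every other equation lies in their span, so all supports are contained in the union of those $r$ supports, which must therefore cover $T$, and since each chosen support has size at most $h$ this gives $|T| \leq rh$. For each fixed datum, the $\fa \in \RR^T_{\geq 0}$ satisfying the equations form a bounded polytope of dimension $|T| - r$ scaling linearly in $n$, contributing $O(n^{|T| - r}) \leq O(n^{d - \lceil d/h \rceil})$ lattice points (using that $t \mapsto t - \lceil t/h \rceil$ is nondecreasing in $t$). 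Since the facets of each $\operatorname{NP}(J_i)$ are fixed, the number of possible data $(L, \{F_i\})$ is bounded independently of $n$, and summing yields the claimed asymptotic bound on $\mu(I^{(n)})$. The principal obstacle is executing the rank lemma and the lattice-point count rigorously in the non-squarefree setting, where each $\operatorname{NP}(J_i)$ may have several facets of varying support; in the squarefree case one has $J_i^n = \fp_i^n$ with the single facet $\sum_{j \in S_i} y_j = n$, and the argument is considerably simpler.
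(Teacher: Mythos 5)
The central step in your argument is flawed: you claim that, for a minimal generator $\fx^{\fa}$ of $I^{(n)}$ and $j \in T=\supp(\fa)$, the condition $\fx^{\pi_i(\fa)} \in J_i^n$ together with $\fx^{\pi_i(\fa)-\fe_j} \notin J_i^n$ forces $\pi_i(\fa)$ onto a facet of $n\cdot\operatorname{NP}(J_i)$ with $j$-positive outer normal. This is not true unless the relevant powers $J_i^n$ are integrally closed. The two membership conditions only place $\pi_i(\fa)$ on the \emph{staircase} boundary of the exponent set of $J_i^n$, which can sit strictly in the interior of $\operatorname{NP}(J_i^n)=n\operatorname{NP}(J_i)$ whenever $J_i^n \subsetneq \overline{J_i^n}$. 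Concretely, take $J_i=(x^4,y^4,x^3y^2,x^2y^3)$ in two variables and $n=1$: then $\fa=(3,2)$ is a minimal generator with $\fa-\fe_1,\fa-\fe_2$ both outside $J_i$, yet $3+2=5>4$ places $(3,2)$ strictly inside $\operatorname{NP}(J_i)=\{x+y\ge 4\}$, on no facet. Such a generator receives no datum $(L,\{F_i\})$ in your scheme and is simply unaccounted for, so the bound on $\mu(I^{(n)})$ does not follow. The obstacle you flag at the end---``several facets of varying support''---is not the real problem; the issue is that minimal generators need not lie on \emph{any} facet once integral closedness fails, and this failure is the generic situation for non-squarefree monomial ideals. (Your rank lemma, by contrast, is fine as a piece of linear algebra, and your reduction $\fx^\fa \in I^{(n)} \Leftrightarrow \fx^{\pi_i(\fa)}\in J_i^n$ for all $i$ is correct.)

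For comparison, the paper avoids Newton polyhedra entirely. It also starts from the observation that a minimal generator is detected by choosing, for each $i\in[d]$, a primary component $Q_{\phi(i)}$ witnessing minimality in the $i$-direction, where $\phi:[d]\to[c]$ is a function into the index set of minimal primes. The set $S_\phi(n)$ of generators with witness-function $\phi$ is then bounded by factoring $\fx^\fa = \prod_{j\in\Image\phi}\pi_{F_j}(\fx^\fa)$ along the partition $\{\phi^{-1}(j)\}$ of $[d]$, and invoking the standard asymptotic $\mu(\pi_{F_j}(Q_j)^n)=O(n^{\ell(\pi_{F_j}(Q_j))-1})$ together with $\ell(\pi_{F_j}(Q_j))\leq|\phi^{-1}(j)|$, giving $|S_\phi(n)|=O(n^{d-|\Image\phi|})$. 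The inequality $|\Image\phi|\cdot\bght I \geq d$ (since $|\supp(\fp_j)|\geq|\phi^{-1}(j)|$) then yields the exponent $d-\lceil d/\bght I\rceil$, which matches the target via the same arithmetic identity you use. This route sidesteps the integral-closure issue because $\mu(J^n)=O(n^{\ell(J)-1})$ holds for all ideals $J$, integrally closed or not. Your facet-and-rank strategy would go through only under an extra integral closedness hypothesis on the $J_i^n$ (e.g.\ in the squarefree case, as you note), but not for the theorem as stated.
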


\begin{proof}
By Proposition \ref{ariRank} we may assume $\dim R/I\gs 1$.  Let $\Min(I)=\{\fp_1,\ldots, \fp_c\}$, we may further assume $\cup_{i=1}^c \supp(\fp_i)=[d]:=\{1,\ldots, d\}$. Let  $\{Q_1,\ldots, Q_c\}$ be the corresponding primary components of $I$ so that $I^{(n)}=\cap_{j=1}^c (Q_j^nR_{\fp_j}\cap R)$ for every $n\gs 1$. Notice that $\fx^\fa$ is a minimal generator of $I^{(n)}$ if and only if $\frac{\fx^\fa}{1}\in \cap_{j=1}^c Q_j^nR_{\fp_j}$  but $\frac{\fx^\fa}{x_i}$ is not in some $Q_j^nR_{\fp_j}$ for each $i\in [d]$. Equivalently, if there exists a function $\phi:[d]\rightarrow  [c]$ such that $\fx^\fa$ is in the set
$$S_\phi(n)=\{\fx^\fa\mid \frac{\fx^\fa}{1}\in \cap_{j=1}^c Q_j^nR_{\fp_j}\text{ and } \frac{\fx^\fa}{x_i}\not\in Q_{\phi(i)}^nR_{\fp_{\phi(i)}} \text{ for every } i\in [d]\}.$$  We  claim that for each of these functions $\phi$ we have  $|S_\phi(n)|=O(n^{d-|\Image \phi |})$; for this  we  may also assume $|S_\phi(n_0)|\neq 0$ for at least one $n_0\in \NN$. First we show that the claim  implies the result. If $i\in  \phi^{-1}(j)$ then there exists $\fa\in \NN^d$  and $n_0\in \NN$ with $\frac{\fx^\fa}{1}\in Q_j^{n_0}R_{\fp_j}$ and $\frac{\fx^\fa}{x_i}\not\in Q_j^{n_0}R_{\fp_j}$, it follows that  $i\in \supp(\fp_j)$. Therefore, $ |\supp( \fp_j)|\gs | \phi^{-1}(j)|$. Thus, $$|\Image \phi |(\bght I)\gs \sum_{j\in \Image \phi} \height \fp_j = \sum_{j\in \Image \phi} |\supp( \fp_j)|\gs \sum_{j\in\Image \phi}| \phi^{-1}(j)|=d.$$
 From this chain of  inequalities we obtain $d-|\Image \phi | +1\ls d-\lceil\frac{d}{\bght I}\rceil+1=d - \left\lfloor\frac{d-1}{\bght I}\right\rfloor$.

Now we prove the claim. For each $\Lambda\subseteq [d]$ let $\pi_\Lambda:R\rightarrow R$ be defined by $\pi_\Lambda(x_i)=1$ if $i\in \Lambda$ and $\pi_\Lambda(x_i)=x_i$ otherwise. Fix $\phi$ as above and set $F_j=[d]\setminus \phi^{-1}(j)$ for each $j\in\Image \phi$.  Then $\fx^\fa\in S_\phi(n)$ if and only if $\pi_{F_j}(\fx^\fa)$ is a minimal generator of $ \pi_{ F_j}(Q_j^n)= \pi_{ F_j}(Q_j)^n$ for every $j$. 
Notice that $\fx^\fa=\prod_{j\in \Image(\phi)}\pi_{F_j}(\fx^\fa)$, hence
$$|S_\phi(n)|\ls \prod_{j\in \Image \phi}\mu(\pi_{ F_j}(Q_j)^n)=\prod_{j\in \Image \phi}O\big(n^{\ell( \pi_{F_j}(Q_j))-1} \big)=O(n^{d-|\Image \phi|}),$$
as desired.
\end{proof}

As a corollary we recover the following result of Lyubeznik.

\begin{cor}[{\cite[Theorem]{Lyu86}}]\label{LyuCor}
Let  $R=k[x_1,\ldots, x_d]$  and $I$ a monomial ideal, then 
$$\ara(I)\ls \ d - \left\lfloor\frac{d-1}{\bght I}\right\rfloor.$$
\end{cor}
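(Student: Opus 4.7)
The plan is simply to chain together Proposition \ref{ariRank} with Theorem \ref{mainMonomial}, the latter of which has just been proved. The identity
$$\ara(I) \;\ls\; \sell(I) \;\ls\; d - \left\lfloor \tfrac{d-1}{\bght I} \right\rfloor$$
will give the corollary, so the task reduces to checking that the hypotheses of Proposition \ref{ariRank} apply in the monomial setting.

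First I would pass from the polynomial ring to its localization at the irrelevant maximal ideal $\fm = (x_1,\ldots,x_d)$. Since $I$ is homogeneous, the arithmetic rank, the symbolic powers, and the symbolic analytic spread are all unaffected by this localization (any radical covering by $u$ generators in $R$ gives one in $R_\fm$, and conversely a covering in $R_\fm$ can be cleared of denominators and made homogeneous without increasing the number of generators). Thus we may work in the local ring $(R_\fm, \fm R_\fm, k)$, which is regular of dimension $d$, hence in particular analytically unramified, formally equidimensional, and with $\depth R_\fm = d \gs \min\{\dim R_\fm/IR_\fm, 1\}$, so the depth hypothesis of Proposition \ref{ariRank} is automatic.

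Next, the Noetherianity hypothesis on $\R^s(I)$ is precisely the content of the Herzog--Hibi--Trung theorem \cite[3.2]{HHT}, cited in Section \ref{FirstS}: for any monomial ideal $I$, the symbolic Rees algebra $\R^s(I) = \oplus_{n \in \NN} I^{(n)} t^n$ is a finitely generated $R$-algebra. Consequently Proposition \ref{ariRank} applies and yields $\ara(I) \ls \sell(I)$.

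Finally, combining this inequality with the bound $\sell(I) \ls d - \lfloor(d-1)/\bght I\rfloor$ just established in Theorem \ref{mainMonomial} gives the desired conclusion. There is no real obstacle here; the only mild subtlety is the local-versus-graded passage in the opening step, and this is routine for homogeneous ideals in a polynomial ring.
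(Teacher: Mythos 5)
Your proposal is correct and takes essentially the same approach as the paper, which simply cites Proposition \ref{ariRank} together with Theorem \ref{mainMonomial}; you have merely spelled out the routine verification (localization at $\fm$, Noetherianity of $\R^s(I)$ via \cite[3.2]{HHT}, the trivial depth condition) that the paper leaves implicit.
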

\begin{proof}
The result follows from Proposition \ref{ariRank} and Theorem \ref{mainMonomial}.
\end{proof}

We also apply Theorem \ref{mainMonomial} to the recent results in \cite[2.4]{HKTT} and \cite[3.3]{NT} to obtain the following interesting consequence.

\begin{cor}\label{depthBound}
Let  $R=k[x_1,\ldots, x_d]$  and $I$ a monomial ideal. Assume $I^{(n)}$ is integrally closed for $n\gg 0$.  Then for every  $n$ such that $I^{(n)}$ is integrally closed we have 
$$\depth R/I^{(n)}\gs \left\lfloor\frac{d-1}{\bght I}\right\rfloor\quad \text{and} \quad \projdim R/I^{(n)}\ls d-\left\lfloor\frac{d-1}{\bght I}\right\rfloor\quad.$$
In particular, the conclusion holds for every squarefree monomial ideal $I$ and every $n\in \ZZ_{>0}$.
\end{cor}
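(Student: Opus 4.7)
The plan is to combine Theorem \ref{mainMonomial} with the cited results \cite[2.4]{HKTT} and \cite[3.3]{NT}. These results concern monomial ideals $I$ in $R=k[x_1,\ldots,x_d]$ whose symbolic powers are integrally closed for $n\gg 0$; they should provide the bound $\projdim R/I^{(n)}\ls \sell(I)$ for any $n$ such that $I^{(n)}$ is integrally closed (the standard form being a Brodmann-type stability result asserting that $\depth R/I^{(n)}$ eventually equals $d-\sell(I)$, with a lower bound whenever $I^{(n)}$ is integrally closed). Once these are in hand, the argument is purely an assembly.

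First I would record that since $R$ is a regular ring, the Auslander--Buchsbaum formula yields
\[
\depth R/I^{(n)} + \projdim R/I^{(n)} = d,
\]
so the two inequalities in the statement are equivalent and it suffices to prove one. Then, for any $n$ with $I^{(n)}$ integrally closed, \cite[2.4]{HKTT} and \cite[3.3]{NT} give $\projdim R/I^{(n)}\ls \sell(I)$, and Theorem \ref{mainMonomial} gives $\sell(I)\ls d-\lfloor(d-1)/\bght I\rfloor$. Combining these yields the projective dimension bound, and the depth bound follows by Auslander--Buchsbaum.

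For the final clause about squarefree ideals, I would verify that for every squarefree monomial ideal $I$ and every $n\in \ZZ_{>0}$ the ideal $I^{(n)}$ is integrally closed. Write $I^{(n)}=\bigcap_{\fp\in\Min(I)}\fp^n$, where each $\fp$ is generated by a subset of the variables, say $\fp=(x_{i_1},\ldots,x_{i_s})$. Then $\fp^n$ is a monomial ideal whose Newton polyhedron is the half-space $\{\fa\in\NN^d:a_{i_1}+\cdots+a_{i_s}\gs n\}$, and its set of exponents already consists of all lattice points in this polyhedron; hence $\fp^n$ equals its integral closure. Since the intersection of integrally closed ideals is integrally closed, $I^{(n)}$ is integrally closed for every $n$, and the general bound applies.

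The main obstacle is not technical but bookkeeping: one must check that \cite[2.4]{HKTT} and \cite[3.3]{NT} really deliver the inequality $\projdim R/I^{(n)}\ls \sell(I)$ in the precise form required (i.e., for each individual $n$ with $I^{(n)}$ integrally closed, rather than only asymptotically). If either reference produces only a bound of the form $\projdim R/I^{(n)}\ls \ell(I^{(n)})$, I would need an intermediate step comparing $\ell(I^{(n)})$ with $\sell(I)$, for instance via Remark \ref{someC} applied to a suitable Veronese subalgebra of $\R^s(I)$; in the squarefree case the Noetherianity of $\R^s(I)$ is guaranteed by \cite[3.2]{HHT}, so this comparison is available.
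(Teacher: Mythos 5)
Your proposal is correct and follows the same route the paper takes: the paper gives no explicit proof for this corollary, stating only that it results from applying Theorem~\ref{mainMonomial} to \cite[2.4]{HKTT} and \cite[3.3]{NT}, and your chain (Auslander--Buchsbaum reduction to one inequality, $\projdim R/I^{(n)}\ls \sell(I)$ from the cited depth-stability results for $n$ with $I^{(n)}$ integrally closed, then $\sell(I)\ls d-\lfloor(d-1)/\bght I\rfloor$ from Theorem~\ref{mainMonomial}) is exactly that assembly. Your caveat about the precise form of the citations is prudent but unfounded: both \cite[2.4]{HKTT} (squarefree case) and \cite[3.3]{NT} (the integrally closed case) supply the lower bound $\depth R/I^{(n)}\gs d-\sell(I)$ for each individual $n$ where $I^{(n)}$ is integrally closed, not merely asymptotically, and your Newton-polyhedron verification that $\fp^n$, hence $I^{(n)}=\bigcap_\fp \fp^n$, is integrally closed for squarefree $I$ is a correct (if standard) justification of the final clause.
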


\begin{remark}
In \cite{Lyu84}, Lyubeznik shows examples of monomial ideals $I$ for each value of $d$ and $\bght I$ such that $\ara(I)= \ d - \left\lfloor\frac{d-1}{\bght I}\right\rfloor.$ These examples show that the  bounds in Theorem \ref{mainMonomial}, and Corollaries \ref{LyuCor} and \ref{depthBound} are sharp.
\end{remark}

\section{Some open questions}

In this section we briefly discuss some simple open questions that arise from our work. As before, we assume $(R,\fm,k)$ is a Noetherian local ring and $I$ and $R$-ideal.

We begin by  proposing the following conjecture. 

\begin{conjecture}\label{conj1}
$\sell(I)\ls \dim R$. 

\end{conjecture}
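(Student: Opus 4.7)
The conjecture asks for $\mu(I^{(n)}) = O(n^{d-1})$ where $d = \dim R$. The natural strategy is to apply Proposition \ref{keybound} to $M = I^{(n)}$ and control both resulting terms polynomially. After completing $R$ (which preserves $\mu(I^{(n)})$ and the symbolic powers), I would assume $R$ is analytically unramified and formally equidimensional. Then, passing to the faithfully flat extension $R' = R[\underline z]_{\fm R[\underline z]}$ as in the proof of Theorem \ref{gen}, I would construct a superficial sequence $\underline x = x_1,\ldots,x_{d-1}$ of $\fm R'$ that works simultaneously with respect to every $(I^{(n)})'$, and is additionally arranged, via prime avoidance and the finiteness of $\Ass^\infty(I)$ \cite{Br2}, to avoid every prime in $\Ass^\infty(I) \setminus \{\fm\}$.

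The multiplicity term from Proposition \ref{keybound} is easily bounded by a constant independent of $n$: by the associativity formula summed over minimal primes $\fp$ of $R'$ of maximal dimension, each summand $\lambda((I^{(n)})_\fp)\, e(R'/\fp)$ satisfies $\lambda((I^{(n)})_\fp) \ls \lambda(R_\fp) < \infty$, since $R_\fp$ is Artinian. Thus $e((I^{(n)})') \ls e(R)$ uniformly in $n$. For the local cohomology term $\mu(\HH{0}{\fm R'}{(I^{(n)})'/(\underline x)(I^{(n)})'})$, the choice of $\underline x$ ensures that $\Tor_1^{R'}(R'/(I^{(n)})', R'/(\underline x))$ is $\fm R'$-torsion. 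Tensoring $\ses{(I^{(n)})'}{R'}{R'/(I^{(n)})'}$ with $R'/(\underline x)$ then exhibits $(I^{(n)})'/(\underline x)(I^{(n)})'$ as an extension of a subideal of $R'/(\underline x)$ (whose $\HH{0}{\fm R'}{\cdot}$ injects into the uniformly bounded $\HH{0}{\fm R'}{R'/(\underline x)}$) by an $\fm R'$-torsion quotient of this $\Tor$. Using the Koszul resolution of $R'/(\underline x)$ and standard partial Euler characteristic identities, the remaining task reduces to a polynomial bound of degree $d-1$ on $\lambda(\HH{0}{\fm}{R/I^{(n)}}) = \lambda((I^{(n)})^{sat}/I^{(n)})$.

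The main obstacle is precisely this last length estimate. Without any depth hypothesis on $R/I^{(n)}$, bounding $\lambda((I^{(n)})^{sat}/I^{(n)}) = O(n^{d-1})$ appears to be essentially equivalent to the conjecture itself, so the plan as outlined does not close. A genuinely new input seems required. Possible avenues include a direct asymptotic analysis of the (possibly non-Noetherian) graded $k$-algebra $\R^s(I)\otimes_R k$ through its finitely generated graded subalgebras, whose Krull dimensions are uniformly bounded by $d$ and whose Hilbert functions grow of degree at most $d-1$; a local/Matlis duality reformulation, reinterpreting the $\fm$-torsion via $\Ext_R^d(R/I^{(n)}, R)$ and exploiting its module structure over a noetherian graded ring; or a descent argument from the divisorial setting of Corollary \ref{divisor} using an appropriate modification of $\Proj \R^s(I)$, whose Krull dimension is at most $d$ even when $\R^s(I)$ is not finitely generated.
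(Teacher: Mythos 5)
This statement is labeled a \emph{conjecture} in the paper, and the authors explicitly state that they do not know how to prove it --- indeed they do not even know how to prove that $\sell(I)$ is finite in general. There is therefore no ``paper proof'' to compare your attempt against; anything claiming to settle it would be new mathematics.

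Your proposal is essentially an honest and careful diagnosis rather than a proof, and your diagnosis is correct: the strategy of Theorem~\ref{gen} and Proposition~\ref{keybound}, run on $M = I^{(n)}$ directly, reduces the conjecture to a polynomial bound on $\lambda\bigl(\HH{0}{\fm}{R/I^{(n)}}\bigr)$, and Lemma~\ref{sats} controls only the $\fm$-torsion of $R/I^n$ (ordinary powers), not of $R/I^{(n)}$. Without the depth hypothesis of Theorem~\ref{gen}(2), there is no mechanism to make the superficial sequence $\underline{x}$ regular (or even Tor-torsion-producing) modulo $I^{(n)}$ uniformly in $n$, and this is exactly where the argument stops closing. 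Two smaller technical points worth flagging: (i) passing to the $\fm$-adic completion does not make $R$ analytically unramified --- that is a hypothesis, not something you can arrange; one would typically pass to $R_{\mathrm{red}}$ and then must track how $\sell$ behaves under that reduction, which is not automatic; (ii) the claim that $\Tor_1^{R'}(R'/(I^{(n)})', R'/(\underline{x}))$ is $\fm$-torsion after your prime avoidance is not justified --- avoiding the primes of $\Ass^\infty(I)$ controls associated primes at the first step, but the associated primes of the successive quotients $R'/(I^{(n)}, x_1, \ldots, x_i)$ can vary with $n$ and need not stay within any fixed finite set absent a depth hypothesis. You correctly conclude that a genuinely new input is required; the alternative avenues you sketch (asymptotics of $\mathcal{R}^s(I)\otimes_R k$ via its finitely generated subalgebras, a local duality reformulation, a Proj-based descent) are reasonable directions, but none of them is developed in this paper, which leaves the conjecture open.
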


We do not know whether this holds true even over regular local rings.  In fact, it is not clear to us how to prove that there is some constant $C$, depending only on $R$, such that we always have $\sell(I)\ls C$; or even if $\sell(I)$ is always finite.

A bolder statement would be:
\begin{question}\label{theConjecture}
Is it true that $\sell(I)\ls \ell(I)$? 
\end{question}

We remark that even when $\R^s(I)$ is Noetherian we do not know whether Question \ref{theConjecture} has an affirmative answer, except when $\ell(I)\gs \dim R-1$ (see Proposition \ref{ariRank}). Further affirmative cases  are included in  Example \ref{determ}, Proposition \ref{extr}, and Corollary  \ref{duttaCor} (2). In addition, Corollaries \ref{depthCor}, \ref{ciCor}, and Corollary \ref{duttaCor} (1) show particular cases of the weaker inequality $\sell(I)\ls \ell(I)+1$.

The classical Burch-Brodmann inequality states that $\ell(I)+\displaystyle\lim_{n\to \infty}\depth R/I^n\ls \dim (R)$. We propose the following symbolic analogue.

\begin{question}\label{depthConj}
Is it true that $\sell(I)+ \displaystyle \liminf_{n\to \infty}\, \depth R/I^{(n)}\ls \dim(R)$?
\end{question}

First, we remark that if $\R^s(I)$ is Noetherian then the inequality in  \ref{depthConj} holds (see for instance \cite[9.23]{BV}). In fact, it suffices to assume that the algebra $\G^s(I):=\oplus_{n\in \NN}I^{(n)}/I^{(n+1)}$ is Noetherian, and in this case equality occurs if $\G^s(I)$ is Cohen-Macaulay. We note that some special cases  \ref{depthConj} follow from Corollary \ref{duttaCor}. In addition, the equality was shown to hold for squarefree monomial ideals in \cite[2.4]{HKTT}, and more generally when $I^{(n)}$ is integrally closed for every $n\gg 0$ in \cite[3.3]{NT}.

It would be very interesting to settle Questions \ref{theConjecture} and \ref{depthConj} even for prime ideals in a regular local ring. 

In Example \ref{notariBound} we show that $\ara(I)$ is not always bounded by $\sell(I)$, even for prime ideals. However, the ring therein is not regular. Thus, we ask the following.

\begin{question}
Assume $R$ is regular and $I$ is prime. Is $\ara(I)\ls \sell(I)$?
\end{question}

\section*{Acknowledgments}

We are grateful to  Florian Enescu, Mel Hochster, Kazuhiko Kurano, Linquan Ma, Paolo Mantero, Thomas Polstra, Kevin Tucker, and Bernd Ulrich for helpful discussions and encouragements.

\end{document}